\theoremstyle{plain}
\newcommand{\newreptheorem}[2]{\newtheorem*{rep@#1}{\rep@title}\newenvironment{rep#1}[1]{\def\rep@title{#2 \ref*{##1}}\begin{rep@#1}}{\end{rep@#1}}}
\newtheorem{theorem}{Theorem}
\newtheorem*{theorem-non}{Theorem}
\newtheorem*{non-lemma}{Lemma}
\newtheorem{lemma}[theorem]{Lemma}
\theoremstyle{definition}
\newcommand{\V}[2]{V^{#1}_{#2}}
\newcommand{\U}[2]{U^{#1}_{#2}}
\newcommand{\W}[2]{W^{#1}_{#2}}
\DeclareMathOperator{\Sig}{Sig}
\DeclareMathOperator{\rang}{rank}
\DeclareMathOperator{\MC}{MC}
\DeclareMathOperator{\cok}{cok}
\DeclareMathOperator{\Hom}{Hom}
\begin{document}

\title{The fluctuations of the mod p rank of triangular matrices}
\author{Andr\'as M\'esz\'aros}
\date{}
\affil{HUN-REN Alfr\'ed R\'enyi Institute of Mathematics}
\maketitle
\begin{abstract}
We consider random lower triangular matrices such that the entries on and below the diagonal are i.i.d. copies of some $\mathbb{Z}$-valued random variable. We prove that the Sylow $p$-subgroups of the cokernels of these matrices have the same constant order fluctuations as that of the matrix products studied by Nguyen and Van Peski.  

As a special case, we can describe the limiting fluctuations of the rank of lower triangular matrices over $\mathbb{F}_p$ with i.i.d. random entries on and below the diagonal.   
\end{abstract}

\section{Introduction}
Let $p$ be a prime, and let $\xi$ be a $\mathbb{Z}$-valued random variable such that 
\begin{equation}\label{xicond}0<\mathbb{P}(\xi\text{ is divisible by }p)<1.\end{equation}

Let $L_n$ be an $n\times n$ random lower triangular matrix such that the entries on and below the diagonal are i.i.d. copies of $\xi$. The cokernel of $L_n$ is defined as the random abelian group $\cok(L_n)=\mathbb{Z}^n/\mathbb{Z}^n L_n$. Let $\Gamma_n$ be the direct sum of the Sylow $p$-subgroup of the torsion of $\cok(L_n)$ and the free part of $\cok(L_n)$.

\begin{theorem}\label{mainthm}
    Let $n_1<n_2<\cdots$ be a sequence of positive integers such that the fractional parts $\{-\log_p(n_j)\}$ converge to $\zeta$. Let $\chi=\chi_0 p^{-\zeta}/(p-1)$, where $\chi_0$ is defined in Lemma~\ref{lemmachi0exists} below.
 
 Then, for any $d\ge 1$, 
 \[\left(\rang(p^{i-1} \Gamma_{n_j})-\lfloor \log_p(n_j)+\zeta \rceil \right)_{i=1}^{d}\]
 converges to the $\mathbb{Z}^d$-valued random variable $\mathcal{L}_{d,p^{-1},\chi}$ in distribution as $j\to\infty$.
\end{theorem}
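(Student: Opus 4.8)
The plan is to reduce the whole statement to the analysis of a Markov chain on finite abelian $p$-groups (equivalently on partitions, allowing one part equal to $\infty$ to record the free rank) and then to recognise this chain — after removing the ``laziness'' coming from the many steps that change nothing — as the one whose fluctuations were computed by Nguyen and Van Peski. The starting point is the recursion obtained by peeling off the last row and column of $L_n$. Writing $L_n$ in block form with last row $(w_n^{\top},L_{nn})$, where $w_n\in\Zset^{n-1}$ has i.i.d.\ $\xi$-entries and $L_{nn}$ is an independent copy of $\xi$, one reads off the presentation
\[
\cok(L_n)\;\cong\;\bigl(\cok(L_{n-1})\oplus\Zset e_n\bigr)\big/\bigl\langle L_{nn}e_n+\overline{w_n}\bigr\rangle ,
\]
where $\overline{w_n}$ is the image of $w_n$ in $\cok(L_{n-1})$. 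Tensoring with $\Zset_p$, the group $\Gamma_n$ is obtained from $\Gamma_{n-1}$ by adjoining one generator $e_n$ subject to the single relation $L_{nn}e_n=-\overline{w_n}$; in particular $\Gamma_n\cong\Gamma_{n-1}$ whenever $p\nmid L_{nn}$, so only the steps with $p\mid L_{nn}$ (a $q$-fraction of them, $q:=\mathbb P(p\mid\xi)$) can alter the group.

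\emph{Equidistribution step.} Next I would show that, conditionally on $\Gamma_{n-1}$ and on $L_{nn}$, the element $\overline{w_n}$ is within total variation distance $o(1)$ of the Haar measure on $\Gamma_{n-1}$. This is the standard universality input under \eqref{xicond}: a length-$m$ vector of i.i.d.\ $\xi$-entries equidistributes in any quotient $p$-group whose rank is much smaller than $m$. Applying it requires an a priori bound $\rang(\Gamma_{n-1})=O(\log n)$ with overwhelming probability, which itself follows by running the recursion above with only crude control: the rank rises by at most one per step, and only when $p\mid L_{nn}$ and $\overline{w_n}$ is sufficiently divisible, an event of probability $\lesssim p^{-\rang(\Gamma_{n-1})}$ — so the rank crosses level $\ell$ only after $\asymp p^{\ell}$ further steps, giving $\rang(\Gamma_n)=\log_p n+O(1)$ and matching the centering $\lfloor\log_p n+\zeta\rceil$.

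\emph{Markov chain and the value of $\chi$.} Combining the two previous steps, the process $(\Gamma_n)_n$ is, up to a coupling error tending to $0$, a genuine Markov chain: at each step, with probability $1-q$ nothing happens, and with probability $\mathbb P(v_p(\xi)=t)$ for $t\in\{1,2,\dots,\infty\}$ one replaces the current group $\Gamma$ by $(\Gamma\oplus\Zset_p)/\langle p^{t}e-g\rangle$ for a Haar-random $g\in\Gamma$, where $t=\infty$ (i.e.\ $L_{nn}=0$) means adjoining a copy of $\Zset_p$ and quotienting by $g$. One then checks that, after the time change that absorbs the laziness — and this is precisely where the hypothesis $\{-\log_p n_j\}\to\zeta$ enters, fixing the phase on the $\log_p$-scale — this is the (universal) chain governing the evolution, under appending one more factor, of the limiting Sylow $p$-subgroup of $\cok(A_k\cdots A_1)$ in the products studied by Nguyen and Van Peski, possibly run at a $\xi$-dependent speed with $t$-dependent jumps. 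Their fluctuation analysis therefore transfers and yields convergence of $\bigl(\rang(p^{i-1}\Gamma_{n_j})-\lfloor\log_p n_j+\zeta\rceil\bigr)_{i=1}^{d}$ to $\mathcal L_{d,p^{-1},\chi}$; the constant $\chi$ is pinned down by matching the rate at which $\rang(\Gamma_n)$ crosses each level — governed by $\mathbb P(\overline{w_n}\in p\Gamma_{n-1})$ together with the law of $v_p(\xi)$ — which produces exactly $\chi=\chi_0\,p^{-\zeta}/(p-1)$ with $\chi_0$ as in Lemma~\ref{lemmachi0exists}.

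\emph{Main obstacle.} The crux is the equidistribution step together with its bootstrap: one needs the approximation of $\overline{w_n}$ by Haar measure with an error small enough to remain negligible when summed over the $\Theta(\log n)$ active steps, uniformly over the groups $\Gamma_{n-1}$ that actually arise, simultaneously with the a priori rank control. A secondary difficulty is the identification step: verifying that the family of $p^{t}$-th-root moves (for all $t$, including the free-rank move $t=\infty$) reassembles, after the time change, into precisely the Nguyen--Van Peski kernel — so that their theorem can be invoked rather than merely that the two chains share a scaling limit — and carrying the $\xi$-dependence cleanly through to the closed form for $\chi$.
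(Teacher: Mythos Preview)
Your plan is a genuinely different route from the paper's, and the recursion you write down is correct, but the central step --- replacing $\overline{w_n}$ by a Haar-random element of $\Gamma_{n-1}$ --- erases exactly the information that determines the limit, so the approach as stated cannot give the right answer.

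Here is the concrete obstruction. Once you pass to the idealised chain with Haar $g$, the transitions depend on $\xi$ only through $q=\mathbb P(p\mid\xi)$ and the law of $v_p(\xi)$. A one-line computation shows that for this chain
\[
\mathbb E\bigl[|\Hom(\Gamma_n,\mathbb F_p)|\ \big|\ \Gamma_{n-1}\bigr]=|\Hom(\Gamma_{n-1},\mathbb F_p)|+q(p-1),
\]
so $n^{-1}\,\mathbb E|\Hom(\Gamma_n,\mathbb F_p)|\to(p-1)q$. But the theorem asserts that this limit is $\chi_0$, which (for $p\ge 3$) depends on the full residue profile of $\xi$ mod $p$, not just on $q$: already for the symmetric law of Lemma~\ref{specialxi} with $p=3$, $\alpha=1/2$ one gets $\chi_0=\tfrac{2}{3}\prod_{i\ge 1}\tfrac{2\beta_i}{3-\beta_i}>1$, while $(p-1)q=1$. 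The reason the Haar approximation fails is that the rank-increase event $\overline{w_n}\in p\Gamma_{n-1}$ lives at the \emph{critical} scale $|\Gamma_{n-1}/p\Gamma_{n-1}|\asymp n$; the law of the image of a length-$(n-1)$ i.i.d.\ $\xi$-vector in a group of this size agrees with Haar only up to a multiplicative $1+O(1)$, and that $O(1)$ is precisely what encodes $\chi_0$. So the ``main obstacle'' is not merely getting the equidistribution error small enough to sum --- it is that the equidistribution you need is false at the relevant precision.

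The identification step is also problematic on its own: Nguyen and Van Peski do not supply a Markov-chain limit theorem to invoke. Their universality for $\cok(A_k\cdots A_1)$ is itself proved via the rescaled moment method, and the one-step move there (multiplication by a full $n\times n$ matrix) is not the one-generator--one-relation move of your chain. Even granting a clean idealised chain, establishing its fluctuation limit would most naturally proceed by computing its rescaled moments.

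For comparison, the paper bypasses dynamics entirely. It invokes the Nguyen--Van Peski rescaled moment criterion and reduces everything to showing, for each finite abelian $p$-group $G$ with $|G|=p^\ell$, that $n^{-\ell}\,\mathbb E|\{v\in G^n:L_nv=0\}|\to\chi_0^\ell\,\MC(G)/\ell!$. The proof records, for each $v$, the ``skeleton'' of indices at which the subgroup $\langle v_1,\dots,v_j\rangle$ strictly grows; Fourier estimates and a uniform-integrability bound show only well-separated maximal skeletons contribute, and for those the probability $\mathbb P(L_nv=0)$ factorises into $\ell$ nearly independent $\mathbb F_p$-problems, each producing one factor of $\chi_0$.
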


In the theorem above, $\lfloor x \rceil$ stands for the integer closest to $x$, that is, $\lfloor x \rceil=\lfloor x+0.5 \rfloor$.

The random variable $\mathcal{L}_{d,p^{-1},\chi}$ can be defined as the single time marginal of an interacting particle system called the \emph{reflecting Poisson sea}~\cite{van2023local,van2023reflecting}. Nguyen and Van Peski~\cite{nguyen2024rank} proved that this random variable can also be characterized by certain types of exponential moments. Moreover, the convergence to $\mathcal{L}_{d,p^{-1},\chi}$ can be established using the so-called \emph{rescaled moment method}. See Section~\ref{sectionrescaled} for more details. Our proof relies on this method and discrete Fourier analysis. 

In the theory of cokernels of random matrices, there is a widespread \emph{universality phenomenon}. For a large class of random matrices, the Sylow $p$-subgroups of their cokernels converge to the \emph{Cohen-Lenstra} distribution or some closely related variant of it \cite{wood2017distribution,wood2019random,nguyen2022random,nguyen2025local,nguyen2024universality,lee2023universality,hodges,gorokhovsky2024time,meszaros2024universal,meszaros2020distribution,meszaros2023cohen,meszaros2024Zpband,lee2025distribution,kang2024random,clancy2015cohen,clancy2015note,friedman1987distribution}. These distributions are also conjectured to appear in the context of arithmetic statistics~\cite{cohen2006heuristics,as1,as2,as3,as4} and random simplicial complexes~\cite{kahle2020cohen,kahle2022topology,meszaros20242}. 

Recently, a new universality class of random matrices was discovered, which shows a different type of behavior. For matrices in this universality class, the size of the Sylow $p$-subgroup of the cokernel goes to infinity as the size of the matrix grows, but the fluctuations of the cokernel remain constant order. The first such examples were found by Van Peski in the integrable setting~\cite{van2023local,van2023reflecting}. A much richer family of examples were found by Nguyen and Van Peski~\cite{nguyen2024rank}. They considered matrices of the form $M_n=A_1A_2\cdots A_k$, where $A_1,\dots,A_k$ are independent $n\times n$ random integral matrices with i.i.d. non-degenerate entries. Under the assumption that $k$ goes to infinity together with $n$, but not too fast, they proved that the fluctuations of the Sylow {$p$-subgroups} of $\cok(M_n)$ are close to $\mathcal{L}_{d,p^{-1},\chi}$, where $\chi$ only depends on $\{-\log_p(k)\}$ but not on the distribution of the entries. The author considered block lower triangular/block bidiagonal matrices with i.i.d. entries in the non-zero blocks, and proved that the cokernels of these matrices show the same limiting behavior as matrix products provided that the sizes of the blocks tend to infinity~\cite{meszaros2024universal}, see also~\cite{meszaros2025rank}. The goal of our paper is to show that a similar result also holds when the blocks are $1\times 1$ matrices, that is, when we have a lower triangular matrix with i.i.d. entries on and below the diagonal. Thus, Theorem~\ref{mainthm} provides new natural examples in the universality class of matrix products. 

The limiting distribution of the fluctuations of $\cok(L_n)$ depends on $\{-\log_p(n)\}$ and also on the distribution of~$\xi$ through the parameter $\chi_0$ defined in Lemma~\ref{lemmachi0exists} below. Therefore, Theorem~\ref{mainthm} gives a weak form of universality, where the limiting object depends slightly on the distribution of the entries. This is quite unusual in the theory of cokernels of random matrices, where the limiting behavior usually does not depend on the distribution of the entries at all \cite{wood2017distribution,wood2019random,nguyen2022random,nguyen2025local,nguyen2024universality,lee2023universality,hodges,gorokhovsky2024time}.  The examples where the limiting behavior does not depend on the distribution of the entries also include the above mentioned matrix products~\cite{nguyen2024rank} and block lower triangular/block bidiagonal matrices~\cite{meszaros2024universal}. 
\begin{lemma}\label{lemmachi0exists}
    The limit
    \[\chi_0=\lim_{n\to\infty} \mathbb{E}|\{v\in\mathbb{F}_p^n\,:\,v_1\neq 0, L_nv=0\}|\]
    exists. Moreover, $0<\chi_0<\infty$.
\end{lemma}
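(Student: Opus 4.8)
The plan is to reduce the lemma to the convergence of a single scalar sequence attached to $\bar\xi:=\xi\bmod p$, and then to control that sequence via its generating function together with discrete Fourier analysis on $\mathbb{F}_p$. Write $\bar L_n$ for $L_n$ reduced mod $p$, set $q:=\mathbb{P}(\bar\xi=0)\in(0,1)$, and note that by \eqref{xicond} the variable $\bar\xi$ is non-constant, so $\eta:=\max_{t\ne 0}|\phi(t)|<1$, where $\phi(t):=\mathbb{E}[\omega^{t\bar\xi}]$ and $\omega:=e^{2\pi i/p}$. First I would pass to $f(n):=\mathbb{E}|\ker\bar L_n|$ and split $\ker\bar L_n$ according to whether the first coordinate of $v$ vanishes: the kernel vectors with $v_1=0$ are exactly $(0,v')$ with $v'$ in the kernel of the bottom‑right $(n-1)\times(n-1)$ block (which has the law of $\bar L_{n-1}$), so $\mathbb{E}|\{v:v_1\ne 0,\ \bar L_n v=0\}|=f(n)-f(n-1)$; hence it suffices to prove $f(n)-f(n-1)$ converges to a finite positive limit. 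By lower‑triangularity, for a fixed nonzero $v$ whose first nonzero entry is at position $m$, the events ``row $i$ of $\bar L_n v=0$ holds'' ($m\le i\le n$) are independent, row $i$ contributing $\mathbb{P}\bigl(Y+\sum_{j=m+1}^{i}(v_j/v_m)X_j=0\bigr)$ for i.i.d.\ copies $Y,X_j$ of $\bar\xi$. Summing $\mathbb{P}(\bar L_n v=0)$ over all nonzero $v$ (grouped by $m$) and reindexing gives $f(n)=1+(p-1)q\sum_{N=0}^{n-1}R_N$, where
\[R_N:=\sum_{w\in\mathbb{F}_p^N}\ \prod_{i=1}^N\mathbb{P}\Bigl(Y+\sum_{j=1}^{i}w_jX_j=0\Bigr).\]
Thus $f(n)-f(n-1)=(p-1)q\,R_{n-1}$, and the whole statement reduces to: $\lim_N R_N$ exists in $(0,\infty)$.

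Next I would study $R_N$ through $\mathcal{R}(z):=\sum_{N\ge 0}R_N z^N$. Decomposing $R_N$ by the positions $i_1<\dots<i_k$ and values $c_1,\dots,c_k\in\mathbb{F}_p^\times$ of the nonzero coordinates of $w$: the factors with index in $[i_\ell,i_{\ell+1})$ all equal $\rho_\ell:=\rho(c_1,\dots,c_\ell)$, where $\rho(c_1,\dots,c_\ell):=\mathbb{P}\bigl(Y+\sum_{m=1}^{\ell}c_mX_m=0\bigr)$, while the factors before $i_1$ equal $q$; summing the resulting geometric ``gap'' sums yields
\[\mathcal{R}(z)=\frac{1}{1-qz}\sum_{k\ge 0}\ \sum_{c\in(\mathbb{F}_p^\times)^k}\ \prod_{\ell=1}^{k}\frac{\rho_\ell z}{1-\rho_\ell z}.\]
The structural fact I need is that $\rho_\ell\to 1/p$ geometrically fast, uniformly in the $c_m$: orthogonality of characters gives $\rho(c_1,\dots,c_\ell)=\tfrac1p\bigl(1+r_\ell(c_1,\dots,c_\ell)\bigr)$ with $r_\ell(c_1,\dots,c_\ell):=\sum_{t\ne 0}\phi(t)\prod_{m=1}^{\ell}\phi(c_m t)$, so $|r_\ell|\le (p-1)\eta^{\ell+1}$; also $0<\rho_\ell<1$ for every $\ell$ (it is a probability, and $<1$ since $\bar\xi$ is non-constant), so each factor $\tfrac{\rho_\ell z}{1-\rho_\ell z}$ is holomorphic on $\{|z|\le 1\}$.

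Then comes the analytic step. Put $\alpha(z):=z/(p-z)$, so $\tfrac{\rho_\ell z}{1-\rho_\ell z}=\alpha(z)\tfrac{1+r_\ell}{1-r_\ell\alpha(z)}$ and $(p-1)\alpha(z)=1-\delta(z)$ with $\delta$ holomorphic and having a simple zero at $z=1$. An elementary computation shows $(p-1)|\alpha(z)|<1$ on $\{|z|\le 1\}\setminus\{1\}$ (equality only at $z=1$); combined with $|r_\ell|\le (p-1)\eta^{\ell+1}$ this forces the double series to converge locally uniformly on a neighbourhood of $\{|z|\le 1\}\setminus\{1\}$, so $\mathcal{R}$ is holomorphic there. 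At $z=1$, pull out $\alpha(z)^k$: $(1-qz)\mathcal{R}(z)=\sum_{k\ge 0}(1-\delta(z))^k b_k(z)$ with $b_k(z):=\mathbb{E}\bigl[\prod_{\ell=1}^{k}\tfrac{1+r_\ell(U_1,\dots,U_\ell)}{1-r_\ell(U_1,\dots,U_\ell)\alpha(z)}\bigr]$, $U_1,U_2,\dots$ i.i.d.\ uniform on $\mathbb{F}_p^\times$. Since $\sum_\ell|r_\ell|<\infty$, the product $\prod_{\ell\ge 1}\tfrac{1+r_\ell}{1-r_\ell\alpha(z)}$ converges and $b_k(z)\to\bar B(z):=\mathbb{E}\prod_{\ell\ge 1}\bigl(\cdots\bigr)$ uniformly for $z$ near $1$, whence $(1-qz)\mathcal{R}(z)=\bar B(z)/\delta(z)+(\text{holomorphic near }1)$. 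Hence $\mathcal{R}$ extends to be holomorphic on a disc of radius $>1$ except for a simple pole at $z=1$, with $\lim_{z\to 1}(1-z)\mathcal{R}(z)=\tfrac{(p-1)\bar B(1)}{p(1-q)}=:R_\infty$; then $\mathcal{R}(z)-R_\infty/(1-z)$ is holomorphic on that disc, so its coefficients $R_N-R_\infty$ tend to $0$, i.e.\ $R_N\to R_\infty$. Positivity is then immediate: each factor of $\bar B(1)=\mathbb{E}\prod_{\ell\ge 1}\tfrac{1+r_\ell}{1-r_\ell/(p-1)}$ is positive ($1+r_\ell=p\,\rho(U_1,\dots,U_\ell)>0$ and $|r_\ell|/(p-1)\le\eta<1$), so $\bar B(1)>0$ and therefore $\chi_0=\lim_n\mathbb{E}|\{v:v_1\ne 0,\ L_n v=0\}|=(p-1)q\,R_\infty\in(0,\infty)$.

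The reduction and the Fourier estimate on $\rho_\ell$ are routine; the main obstacle is the analytic continuation in the last step — bounding the double series uniformly in the regime $(p-1)|\alpha(z)|\uparrow 1$ and establishing the simple‑pole behaviour at $z=1$ — where one genuinely needs both the uniform convergence $b_k\to\bar B$ (guaranteed by $\sum_\ell|r_\ell|<\infty$) and the elementary geometry of the region $\{(p-1)|\alpha(z)|<1\}$, which contains $\{|z|\le 1\}\setminus\{1\}$.
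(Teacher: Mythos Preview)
Your argument is correct, but it proceeds along a genuinely different route from the paper. The paper's proof (given as Lemma~\ref{lemmachi0existsS}) is purely probabilistic: it realises the quantity of interest as $\tau(w)\,\mathbb{E}Z_{n-1}$ for the random product $Z_i=\prod_{j=2}^{i+1}|H|\tau(X_{\le j})$ (with $H=\mathbb{F}_p$), and then invokes the general Lemma~\ref{ZnLemma}, which supplies a uniform bound $\mathbb{E}\bigl(\sup_i Z_i\bigr)^{\kappa}<K$ for some $\kappa>1$ together with almost sure convergence $Z_i\to Z_\infty$; dominated convergence then gives existence and finiteness of the limit, and positivity is obtained by bounding $Z_\infty$ from below on an event of positive probability. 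Your approach instead rewrites the quantity as $(p-1)q\,R_{n-1}$, passes to the generating function $\mathcal{R}(z)=\sum R_N z^N$, and uses the Fourier estimate $|r_\ell|\le (p-1)\eta^{\ell+1}$ to show that $\mathcal{R}$ continues meromorphically past the unit circle with a unique simple pole at $z=1$; the limit and its positivity drop out of the residue. What you gain is an exponential rate $R_N-R_\infty=O((1+\varepsilon)^{-N})$ and a cleaner closed form for $\chi_0$ in terms of $\bar B(1)$. What the paper gains is that its dominated-convergence machinery (Lemma~\ref{ZnLemma}) is stated for an arbitrary finite abelian $p$-group $H$ and yields the moment bound $\mathbb{E}(\sup_i Z_i)^\kappa<K$; both of these are reused crucially later (in Lemma~\ref{RLemma} and in part~(c) of Lemma~\ref{lemmachi0existsS}) on the way to Theorem~\ref{thmmoment}, so the paper's route is doing double duty that your generating-function argument would not immediately cover.
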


Note that $\chi_0$ only depends on the distribution of $\xi$ mod $p$.

Next, we give another description of the quantities $\rang(p^{i-1} \Gamma_n)$. Let $G_p$ be the Sylow $p$-subgroup of the torsion part of $\cok(L_n)$. Then $G_p$ can be uniquely written as
\[G_p=\bigoplus_{i=1}^r \mathbb{Z}/p^{\lambda_i}\mathbb{Z}\]
for some partition $\lambda=(\lambda_1,\lambda_2,\dots,\lambda_r)$, where $\lambda_1\ge \lambda_2\ge\cdots\ge \lambda_r>0.$

Let $\lambda'$ be the conjugate partition of $\lambda$, that is, $\lambda'_i=|\{j:\lambda_j\ge i\}|$. Moreover, let $b$ be the rank of the free part of $\cok(L_n)$. Then
\[\rang(p^{i-1} \Gamma_n)=\lambda'_i+b.\]

Also note that $\rang(\Gamma_n)$ is just the mod $p$ corank of $L_n$. In other words, if $\overline{L}_n$ is the matrix over $\mathbb{F}_p$ obtained from $L_n$ by considering the mod $p$ remainder of each entry, then $\rang(\Gamma_n)=\dim\ker \overline{L}_n$. Thus, in the special case of $d=1$, Theorem~\ref{mainthm} describes the limiting fluctuations of the mod $p$ corank of lower triangular matrices. Note that the special case of this statement when $\xi$ is uniformly distributed mod $p$ was established in~\cite{van2024rank}. See also~\cite{kirillov1995variations,borodin1995limit,borodin1999law} for other aspects of uniform random lower triangular matrices over finite fields. For the distribution of $\mathcal{L}_{1,p^{-1},\chi}$ the following formula was given in \cite{nguyen2024rank}:
\[\mathbb{P}(\mathcal{L}_{1,p^{-1},\chi}=x)=\frac{1}{\prod_{i=1}^{\infty} (1-p^{-i})}\sum_{m=0}^\infty \exp(-\chi p^{m-x})\frac{(-1)^m p^{-{{m}\choose{2}}}}{\prod_{j=1}^m (1-p^{-j})}\text{ for any }x\in\mathbb{Z}.\]

Our next lemma provides an explicit formula for $\chi_0$ in certain special cases. For $p=2$, this lemma actually covers all $\mathbb{Z}$-valued random variables $\xi$ satisfying \eqref{xicond}.
\begin{lemma}\label{specialxi}
    Assume that there is an $\alpha\in(0,1)$ such that
    \[\mathbb{P}(\xi\equiv i\mod p)=\begin{cases}\alpha&\text{ for $i=0$},\\
    \frac{1-\alpha}{p-1}&\text{ for $i=1,2,\dots,p-1$.}
    \end{cases}\]
    
    Then
    \[\chi_0=\frac{p-1}p\prod_{i=1}^\infty \frac{(p-1)\beta_i}{p-\beta_i},\]
where \[\beta_i=1+(p-1)\left(\frac{p\alpha-1}{p-1}\right)^{i}.\]
\end{lemma}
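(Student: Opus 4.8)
The plan is to compute the limit $\chi_0 = \lim_{n\to\infty}\mathbb{E}|\{v\in\mathbb{F}_p^n : v_1\neq 0,\ \overline{L}_nv=0\}|$ directly, where $\overline{L}_n$ is the reduction of $L_n$ mod $p$. First I would fix a nonzero vector $v\in\mathbb{F}_p^n$ and try to evaluate $\mathbb{P}(\overline{L}_nv=0)$. Writing out the rows of the lower triangular matrix, the condition $\overline{L}_nv=0$ becomes, for each $k=1,\dots,n$, the equation $\sum_{\ell\le k}\overline{L}_{k\ell}v_\ell = 0$ in $\mathbb{F}_p$, and these $n$ events are independent because the entries in different rows are independent. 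So $\mathbb{P}(\overline{L}_nv=0)=\prod_{k=1}^n q_k(v)$, where $q_k(v)=\mathbb{P}\big(\sum_{\ell\le k}\overline{L}_{k\ell}v_\ell = 0\big)$ is determined by the multiset of values $\{v_1,\dots,v_k\}$. Under the hypothesis of the lemma, the distribution of $\overline{L}_{k\ell}$ depends only on whether it is $0$ (probability $\alpha$) or one of the $p-1$ nonzero residues (each with probability $(1-\alpha)/(p-1)$), so $q_k(v)$ depends only on the number $m_k$ of indices $\ell\le k$ with $v_\ell\neq 0$.

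The key computational step is therefore to find $q_k$ as a function of $m_k$. I would do this by a standard character-sum / Fourier computation on $\mathbb{F}_p$: if $\psi$ ranges over the additive characters of $\mathbb{F}_p$, then $q_k = \frac1p\sum_\psi \prod_{\ell\le k}\mathbb{E}\,\psi(\overline{L}_{k\ell}v_\ell)$. For the trivial character the product is $1$; for each of the $p-1$ nontrivial characters, the factors with $v_\ell=0$ contribute $1$, and each factor with $v_\ell\neq 0$ contributes $\mathbb{E}\,\psi(\overline{L}_{k\ell}v_\ell) = \alpha + \frac{1-\alpha}{p-1}\sum_{j\neq 0}\psi(j) = \alpha + \frac{1-\alpha}{p-1}(-1) = \frac{p\alpha-1}{p-1}$, using $\sum_{j\in\mathbb{F}_p}\psi(j)=0$. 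Crucially this value does not depend on $v_\ell$ or on which nontrivial $\psi$ we chose. Hence $q_k = \frac1p\big(1 + (p-1)\rho^{m_k}\big)$ with $\rho = \frac{p\alpha-1}{p-1}$, i.e. $q_k = \beta_{m_k}/p$ in the notation of the lemma. Now I group the nonzero vectors $v$ by the set $S=\{\ell : v_\ell\neq 0\}$: given the positions $1\le \ell_1<\ell_2<\cdots<\ell_s\le n$ of the nonzero coordinates, the running counts are $m_k = |S\cap\{1,\dots,k\}|$, and there are $(p-1)^s$ choices for the nonzero entries, each giving the same probability. Summing,
\[
\mathbb{E}|\{v : \overline{L}_nv=0\}| = \sum_{S\subseteq\{1,\dots,n\}} (p-1)^{|S|}\prod_{k=1}^n \frac{\beta_{m_k(S)}}{p}.
\]
Imposing $v_1\neq 0$ just restricts to $S\ni 1$.

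From here the plan is to organize the sum by the \emph{gaps} between consecutive nonzero positions. Between the $(i-1)$st and $i$th nonzero coordinate the count $m_k$ is frozen at $i-1$, so a block of $g$ such rows contributes a factor $(\beta_{i-1}/p)^g$, while the row at the $i$th nonzero position contributes $\beta_i/p$ times the weight $(p-1)$ from choosing its nonzero value. Requiring $v_1\neq 0$ means the first nonzero position is $\ell_1=1$ and $\beta_0/p = 1$ handles the empty initial block automatically. Summing the geometric series over each gap length (legitimate since $|\beta_{i-1}/p|<1$ because $|\rho|<1$, which follows from $\alpha\in(0,1)$), and letting $n\to\infty$ so that each geometric series runs to infinity, I expect to get a telescoping-type product: the $i$th nonzero position contributes $(p-1)\,\frac{\beta_i}{p}\cdot\frac{1}{1-\beta_{i-1}/p} = \frac{(p-1)\beta_i}{p-\beta_{i-1}}$, except that one must be careful with the indexing of which $\beta$ sits in the denominator. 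After the last chosen nonzero position there is a final infinite block at level $s$ contributing $\frac{1}{1-\beta_s/p}$, but $\beta_s\to 1$ as $s\to\infty$ so this tail factor tends to $\frac{1}{1-1/p}=\frac{p}{p-1}$; combined with the overall structure this should reorganize into $\frac{p-1}{p}\prod_{i=1}^\infty \frac{(p-1)\beta_i}{p-\beta_i}$, matching the claimed formula. I would also separately check convergence of the infinite product: since $\beta_i = 1 + (p-1)\rho^i$ with $|\rho|<1$, we have $\frac{(p-1)\beta_i}{p-\beta_i} = \frac{(p-1)(1+(p-1)\rho^i)}{(p-1)(1-\rho^i)} = \frac{1+(p-1)\rho^i}{1-\rho^i} = 1 + \frac{p\rho^i}{1-\rho^i}$, whose logarithm is $O(\rho^i)$, so the product converges absolutely; this also shows $0<\chi_0<\infty$, consistent with Lemma~\ref{lemmachi0exists}.

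The main obstacle I anticipate is purely bookkeeping: getting the gap decomposition and the index shifts exactly right so that the telescoping collapses to $\prod_i \frac{(p-1)\beta_i}{p-\beta_i}$ with the correct prefactor $\frac{p-1}{p}$, and rigorously justifying the interchange of the $n\to\infty$ limit with the (now infinite) sum over subsets $S$ — this needs a dominated-convergence argument using $|\rho|<1$ to bound the tails uniformly. The character computation and the per-row probability are routine; the final assertion that this construction exhausts all $\xi$ satisfying \eqref{xicond} when $p=2$ is immediate, since for $p=2$ there is only one nonzero residue, so $\mathbb{P}(\xi\equiv 1)=1-\alpha=\frac{1-\alpha}{p-1}$ automatically.
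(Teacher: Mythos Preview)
Your character computation and the identity $q_k=\beta_{m_k}/p$ are exactly what the paper does, and the idea of organizing the sum by the gaps between successive nonzero coordinates is also the same. Where your outline diverges from the paper is in the passage to the limit. The step ``let $n\to\infty$ so that each geometric series runs to infinity, and add a tail factor $\frac{1}{1-\beta_s/p}$'' does not compute $\chi_0$: removing the constraint $g_1+\cdots+g_s=n$ and summing each gap freely produces, for fixed $s$, the quantity $\prod_{i=1}^s\frac{(p-1)\beta_i}{p-\beta_i}$, and then summing over $s$ gives a divergent series (the terms tend to the positive limit $\tfrac{p}{p-1}\chi_0$). So the ``telescoping plus tail'' heuristic is not a computation of $\lim_n E_n$; this is more than bookkeeping.

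The paper sidesteps this cleanly by rewriting the sum over $v$ with $v_1\neq 0$ as $\frac{p-1}{p}\,\mathbb{E}Y_n$, where $Y_n=\prod_{j\le n}p\,\tau(X_{\le j})$ for a random $X$ with $X_1$ uniform in $\mathbb{F}_p\setminus\{0\}$ and $X_2,X_3,\dots$ i.i.d.\ uniform in $\mathbb{F}_p$. The already-proved Lemma~\ref{ZnLemma} (the dominated convergence you allude to) gives $\mathbb{E}Y_n\to\mathbb{E}Y_\infty$. Now the gaps $T_{i+1}-T_i$ are i.i.d.\ with $\mathbb{P}(T_{i+1}-T_i=k)=(p-1)/p^k$, and since $p\,\tau(X_{\le j})=\beta_i$ for $T_i\le j<T_{i+1}$, one has $Y_\infty=\prod_{i\ge1}\beta_i^{\,T_{i+1}-T_i}$; independence then factorizes the expectation as $\prod_{i\ge1}\frac{(p-1)\beta_i}{p-\beta_i}$, with the prefactor $\frac{p-1}{p}$ coming from the rewriting, not from any tail manipulation. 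Note that grouping the rows into blocks $[T_i,T_{i+1})$ (so the nonzero position starts the block) makes every row in block~$i$ carry the same $\beta_i$, which is why the paper gets $\frac{(p-1)\beta_i}{p-\beta_i}$ directly without the index shift you worried about.
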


\medskip

\textbf{Acknowledgments.} The author was supported by the Marie Sk\l{}odowska-Curie Postdoctoral Fellowship "RaCoCoLe". The author is grateful to Roger Van Peski for his comments.

\section{Preliminaries on the rescaled moment method}\label{sectionrescaled}

Given a partition $\lambda=(\lambda_1,\dots,\lambda_r)$, let \[G_\lambda=\bigoplus_{i=1}^r \mathbb{Z}/p^{\lambda_i}\mathbb{Z}\]
be the abelian $p$-group corresponding to $\lambda$, and let $\lambda'$ be the conjugate partition of $\lambda$. We also define $|\lambda|=\sum_{i=1}^r \lambda_i$.

We define $\MC(G_{\lambda'})$ as the number of chains of subgroups $\{0\}\subsetneq H_1\subsetneq \cdots\subsetneq H_{|\lambda|}= G_{\lambda'}$ of length $|\lambda|$, that is, of maximal length.  

Given a positive integer $d$, let
\begin{align*}\Sig_d&=\{(\lambda_1,\lambda_2,\dots,\lambda_d)\in \mathbb{Z}^d\,:\, \lambda_1\ge\lambda_2\cdots\ge \lambda_d\}\text{ and }\\
\Sig_d^{\ge 0}&=\{(\lambda_1,\lambda_2,\dots,\lambda_d)\in \mathbb{Z}^d\,:\, \lambda_1\ge\lambda_2\cdots\ge \lambda_d\ge 0\}.
\end{align*}

Note that $\Sig_d^{\ge 0}$ is just the set of partitions with at most $d$ parts.

For $\chi>0$, let $\mathcal{L}_{d,p^{-1},\chi}$ be a $\Sig_d\,$-valued random variable such that
\[\mathlarger{\mathbb{E} p^{\left\langle \mathcal{L}_{d,p^{-1},\chi}\,,\,\lambda \right\rangle}}=\frac{((p-1)\chi)^{|\lambda|}}{|\lambda|!}\MC(G_{\lambda'})\qquad \text{ for all }\lambda\in\Sig_d^{\ge 0}.\]
It was proved by Nguyen and Van Peski that such a random variable exists and its distribution is uniquely determined~\cite{nguyen2024rank}.

The convergence to $\mathcal{L}_{d,p^{-1},\chi}$ can be obtained using the so called rescaled moment method given in the next theorem, which is a new variant of the widely used moment method of Wood~\cite{wood2017distribution,wood2022probability}.

\begin{theorem}{\normalfont(Nguyen and Van Peski~\cite[Theorem 1.2 and Proposition 5.3.]{nguyen2024rank})} \label{rescaledmomentmethod}Fix a prime~$p$ and a positive integer $d$. Let $(\Gamma_j)_{j\ge 1}$ be a sequence of random
finitely-generated abelian $p$-groups and let $(n_j)_{j\ge 1}$ be a sequence of positive real numbers such that the following holds:
\begin{enumerate}[(a)]
\item The fractional part $\{-\log_p(n_j)\}$ converges to $\zeta$.
\item For all $\lambda\in \Sig_d^{\ge 0}$, we have
\[\lim_{j\to\infty} \mathbb{E}\
\frac{|\Hom(\Gamma_j,G_{\lambda'})|}{n_j^{|\lambda|}}=\frac{\chi_0^{|\lambda|}\MC(G_{\lambda'})}{|\lambda|!}.\]

\end{enumerate}

Then
\[\left(\rang(p^{i-1}\Gamma_j)-\lfloor\log_p(n_{j})+\zeta\rceil\right)_{i=1}^d\]
converges to $\mathcal{L}_{d,p^{-1},\chi}$ in distribution as $j\to\infty$, where $\chi=\frac{\chi_0 p^{-\zeta}}{p-1}$.
 
\end{theorem}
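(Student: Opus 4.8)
The argument has three layers: a routine algebraic translation of hypothesis~(b) into a statement about the ``$p$‑power moments'' of the rank vector, an equally routine rescaling step that folds in hypothesis~(a), and a genuinely analytic step upgrading moment convergence to convergence in distribution.

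\emph{Step 1: from $\Hom$‑counts to moments of the rank vector.} Write each $\Gamma_j\cong\mathbb Z^{b_j}\oplus G_{\nu^{(j)}}$ with $\nu^{(j)}$ a partition, and recall the identity $\rang(p^{i-1}\Gamma_j)=b_j+(\nu^{(j)})'_i$ used above. Combining the standard formula $|\Hom(G_\mu,G_\rho)|=p^{\sum_{s\ge1}\mu'_s\rho'_s}$ for finite abelian $p$‑groups with $|\Hom(\mathbb Z,G_{\lambda'})|=|G_{\lambda'}|=p^{|\lambda'|}=p^{|\lambda|}$ and $(\lambda')'=\lambda$, one gets, for every $\lambda\in\Sig_d^{\ge0}$ (which has at most $d$ parts),
\[
|\Hom(\Gamma_j,G_{\lambda'})|=p^{b_j|\lambda|}\cdot p^{\sum_{s\ge1}(\nu^{(j)})'_s\lambda_s}=p^{\sum_{s=1}^{d}\rang(p^{s-1}\Gamma_j)\,\lambda_s}=p^{\langle R_j,\lambda\rangle},
\]
where $R_j:=\bigl(\rang(p^{s-1}\Gamma_j)\bigr)_{s=1}^{d}\in\Sig_d^{\ge0}$. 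Thus hypothesis~(b) says precisely that $\mathbb E\, p^{\langle R_j,\lambda\rangle}/n_j^{|\lambda|}\to\chi_0^{|\lambda|}\MC(G_{\lambda'})/|\lambda|!$ for all $\lambda\in\Sig_d^{\ge0}$.

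\emph{Step 2: the shift.} Put $X_j:=R_j-c_j\mathbf 1$ with $c_j:=\lfloor\log_p n_j+\zeta\rceil$ and $\mathbf 1=(1,\dots,1)$. Since $\langle\mathbf 1,\lambda\rangle=|\lambda|$, we have $\mathbb E\,p^{\langle X_j,\lambda\rangle}=p^{(\log_p n_j-c_j)|\lambda|}\cdot\bigl(\mathbb E\,p^{\langle R_j,\lambda\rangle}/n_j^{|\lambda|}\bigr)$. Hypothesis~(a) gives $\log_p n_j-c_j\to-\zeta$ (for large $j$ one checks $c_j=-\lfloor-\log_p n_j\rfloor$, so that $\log_p n_j-c_j=-\{-\log_p n_j\}\to-\zeta$), hence, using Step~1 and $(p-1)\chi=\chi_0p^{-\zeta}$,
\[
\mathbb E\,p^{\langle X_j,\lambda\rangle}\;\longrightarrow\;p^{-\zeta|\lambda|}\,\frac{\chi_0^{|\lambda|}\MC(G_{\lambda'})}{|\lambda|!}=\frac{((p-1)\chi)^{|\lambda|}\MC(G_{\lambda'})}{|\lambda|!}=\mathbb E\,p^{\langle\mathcal L_{d,p^{-1},\chi},\lambda\rangle}\qquad(\lambda\in\Sig_d^{\ge0}).
\]
So the ``rescaled moments'' of $X_j$ converge to those characterizing $\mathcal L_{d,p^{-1},\chi}$.

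\emph{Step 3: moment convergence $\Rightarrow$ convergence in distribution — the crux.} Since $X_j$ and $\mathcal L_{d,p^{-1},\chi}$ take values in the countable set $\Sig_d$ and the latter is a.s.\ finite, it suffices to prove $\mathbb P(X_j=x)\to\mathbb P(\mathcal L_{d,p^{-1},\chi}=x)$ for every $x\in\Sig_d$. Writing $\lambda\in\Sig_d^{\ge0}$ as $\lambda_i=\sum_{k\ge i}a_k$ with $a_k\ge0$ gives $p^{\langle x,\lambda\rangle}=\prod_{k=1}^d\bigl(p^{x_1+\cdots+x_k}\bigr)^{a_k}$, so Step~2 says exactly that every joint moment of the positive random vector $\bigl(p^{X_j^{(1)}},p^{X_j^{(1)}+X_j^{(2)}},\dots\bigr)$ converges; but these moments grow like $p^{\Theta(|\lambda|^2)}$ (already for $d=1$, $\mathbb E\,p^{nX_j^{(1)}}\to\chi^n p^{n(n+1)/2}\prod_{i=1}^n(1-p^{-i})/n!$), far too fast for the Carleman criterion, so the classical moment method does not apply. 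The resolution, due to Nguyen and Van Peski, is to repackage the moments through a $q$‑exponential transform: for $d=1$ one checks, using $[n]_p!/(p^{-1};p^{-1})_n=p^{n(n+1)/2}/(p-1)^n$ and the Euler identity $\sum_{n\ge0}q^{\binom n2}w^n/(q;q)_n=\prod_{i\ge0}(1+wq^i)$, that $\mathbb E\prod_{i\ge0}\bigl(1-zp^{\,X_j^{(1)}-i}\bigr)\to e^{-p\chi z}$ (with the analogous identity in $\Sig_d$), and that this transform can be \emph{inverted} to recover the p.m.f.; this simultaneously gives the uniqueness of $\mathcal L_{d,p^{-1},\chi}$ (quoted above) and the sought p.m.f.\ convergence. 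Carrying this through is where I expect the real difficulty: one must (i) show the infinite products/series defining the $q$‑exponential expectations converge and may be passed to the limit \emph{uniformly in $j$}, turning the factorial‑damped bound $((p-1)\chi)^{|\lambda|}\MC(G_{\lambda'})/|\lambda|!$ into genuine absolute convergence, and (ii) justify the inversion back to probabilities. A softer‑looking route — tightness of $(X_j)$ on $\Sig_d$, then uniform integrability of $p^{\langle X_j,\lambda\rangle}$ (from boundedness of the $2\lambda$‑moments) to pass moments to an arbitrary subsequential limit, then uniqueness of $\mathcal L_{d,p^{-1},\chi}$ to identify it — reduces everything to tightness; here upper tightness follows at once from Markov applied to $\mathbb E\,p^{X_j^{(1)}}$ together with $X_j^{(1)}\ge\cdots\ge X_j^{(d)}$, but \emph{lower} tightness (no mass escaping to $-\infty$) is simply not implied by the nonnegative‑exponent moments of hypothesis~(b) and requires the same generating‑function input, which is why the $q$‑exponential approach is the one to pursue.
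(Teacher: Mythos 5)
This theorem is not proved in the paper at all: it is imported verbatim from Nguyen and Van Peski \cite[Theorem 1.2 and Proposition 5.3]{nguyen2024rank}, so there is no internal proof to compare yours against. Your Steps 1 and 2 are correct and are exactly the routine translation underlying how the result is used: $|\Hom(\Gamma_j,G_{\lambda'})|=p^{\langle R_j,\lambda\rangle}$ with $R_j=(\rang(p^{s-1}\Gamma_j))_{s=1}^d$, and the recentering by $\lfloor\log_p n_j+\zeta\rceil$ turns hypothesis (b) into convergence of $\mathbb{E}\,p^{\langle X_j,\lambda\rangle}$ to $((p-1)\chi)^{|\lambda|}\MC(G_{\lambda'})/|\lambda|!$, which is precisely the exponential-moment characterization of $\mathcal{L}_{d,p^{-1},\chi}$ quoted in Section 2.

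The genuine gap is Step 3, and you say so yourself. Everything nontrivial in the theorem lives there: because the moments grow like $p^{\Theta(|\lambda|^2)}$, the problem is not moment-determinate in any classical sense, and the soft route you sketch (tightness plus uniform integrability plus uniqueness) fails exactly where you note it does --- the nonnegative-exponent moments give no lower tightness, so mass could a priori escape to $-\infty$. The $q$-exponential/generating-function repackaging you describe is indeed the mechanism in \cite{nguyen2024rank}, but your proposal only names it: the uniform-in-$j$ interchange of limits for the transform, its inversion back to the point masses, and the multivariate ($d>1$) version on $\Sig_d$ are all left unestablished, and these are the actual content of Nguyen--Van Peski's Theorem 1.2 and Proposition 5.3. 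So as a standalone argument the proposal is an accurate reduction plus a citation-shaped placeholder for the hard part; within the present paper that is also all that is needed, since the author likewise treats the statement as a black box.
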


The quantities $\mathbb{E}|\Hom(\Gamma_j,G_{\lambda'})|$ are often referred to as the moments of $\Gamma_j$. If $\Gamma_n$ is defined as in the Introduction, then the following well-known lemma gives a useful expression for the moments of $\Gamma_n$, for the proof see for example \cite{meszaros2024universal}.

\begin{lemma}\label{momentexpression}
For any deterministic finite abelian $p$-group $G$, we have
\[\mathbb{E}|\Hom(\Gamma_n,G)|=\mathbb{E}|\Hom(\cok(L_n),G)|=\mathbb{E}|\{v\in G^n\,:\,L_nv=0\}|.\]
\end{lemma}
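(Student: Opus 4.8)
The plan is to prove both equalities \emph{pointwise}, i.e.\ for each realization of $L_n$ separately, and then take expectations; no probabilistic input is actually needed. For the first equality, I would invoke the structure theorem to write $\cok(L_n)\cong \mathbb{Z}^b\oplus P\oplus T$, where $\mathbb{Z}^b$ is the free part, $P$ is the Sylow $p$-subgroup of the torsion, and $T$ is the prime-to-$p$ part of the torsion, so that by definition $\Gamma_n\cong \mathbb{Z}^b\oplus P$. Since $\Hom(-,G)$ is additive in the first argument and $G$ is a finite abelian $p$-group, $\Hom(T,G)=0$ (the order of the image of any element of $T$ divides $\gcd(|T|,|G|)=1$). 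Hence $|\Hom(\cok(L_n),G)|=|\Hom(\mathbb{Z}^b\oplus P,G)|\cdot|\Hom(T,G)|=|\Hom(\Gamma_n,G)|$ for every outcome.

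Next I would establish $|\Hom(\cok(L_n),G)|=|\{v\in G^n:L_nv=0\}|$, again pointwise, so it holds verbatim for any integer matrix in place of $L_n$. A homomorphism $\cok(L_n)=\mathbb{Z}^n/\mathbb{Z}^n L_n\to G$ is the same as a homomorphism $\phi\colon\mathbb{Z}^n\to G$ that kills the submodule $\mathbb{Z}^n L_n$. Such a $\phi$ is determined by, and uniquely determines, the tuple $v=(\phi(e_1),\dots,\phi(e_n))\in G^n$ of images of the standard basis. The submodule $\mathbb{Z}^n L_n$ is spanned by the rows $e_jL_n$ of $L_n$, so $\phi$ factors through the cokernel if and only if $\phi(e_jL_n)=0$ for $j=1,\dots,n$; writing this out, $\phi(e_jL_n)=\sum_i (L_n)_{ji}\,\phi(e_i)=\sum_i (L_n)_{ji}v_i=(L_nv)_j$, where $L_nv\in G^n$ denotes the matrix–column-vector product computed using the $\mathbb{Z}$-module structure of $G$. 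Thus $\phi$ descends to $\cok(L_n)$ precisely when $L_nv=0$, which gives the desired bijection $\Hom(\cok(L_n),G)\leftrightarrow\{v\in G^n:L_nv=0\}$.

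Combining the two pointwise identities and taking expectations over $L_n$ yields the lemma. I do not expect a genuine obstacle here: the only point requiring care is the bookkeeping around the left-action convention $\cok(L_n)=\mathbb{Z}^n/\mathbb{Z}^n L_n$, so that "annihilates $\mathbb{Z}^n L_n$" translates into the equation $L_nv=0$ rather than its transpose $L_n^{\mathsf T}v=0$; with that fixed, everything is immediate, which is why this is a well-known lemma whose proof can also be found in~\cite{meszaros2024universal}.
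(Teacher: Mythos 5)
Your proof is correct, and it is the standard argument: the paper itself does not spell out a proof but defers to the reference it cites, where exactly this reasoning appears. Both of your pointwise identities --- $\Hom(T,G)=0$ for the prime-to-$p$ torsion $T$ since $G$ is a finite abelian $p$-group, and the bijection between homomorphisms $\mathbb{Z}^n/\mathbb{Z}^nL_n\to G$ and vectors $v\in G^n$ with $L_nv=0$ (with the row-span convention correctly yielding $L_nv=0$ rather than the transposed equation) --- are sound, and taking expectations is immediate since all quantities are finite.
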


Combining Theorem~\ref{rescaledmomentmethod} and Lemma~\ref{momentexpression}, we see that Theorem~\ref{mainthm} follows once we prove the following theorem.

\begin{theorem}\label{thmmoment}
    For any finite abelian $p$-group $G$ of size $p^\ell$, we have
    \[\lim_{n\to\infty} n^{-\ell} \mathbb{E}|\{v\in G^n\,:\,L_nv=0\}|=\frac{\chi_0^\ell MC(G)}{\ell!}.\]
\end{theorem}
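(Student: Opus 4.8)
The plan is to analyze $N_n(G):=\mathbb{E}\,|\{v\in G^n:L_nv=0\}|$, which by Lemma~\ref{momentexpression} is the quantity in Theorem~\ref{thmmoment}, by induction on $\ell=\log_p|G|$, via a recursion in the first coordinate of~$v$.

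\textbf{Setup and the first-coordinate recursion.} Since distinct rows of $L_n$ involve pairwise disjoint sets of entries, for a fixed $v$ the events $\{(L_nv)_i=0\}$ are independent, so $N_n(G)=\sum_{v\in G^n}\prod_{i=1}^nq_i(v_1,\dots,v_i)$ with $q_i(v_1,\dots,v_i)=\mathbb{P}\big(\sum_{j=1}^i\xi_jv_j=0\big)$ for i.i.d.\ copies $\xi_j$ of $\xi$. Splitting off the first coordinate, the terms with $v_1=0$ reproduce (the submatrix $(L_n)_{2\le j\le i\le n}$ being a fresh copy of $L_{n-1}$) a total of $N_{n-1}(G)$, while for $v_1=g\neq0$ one gets $\mathbb{P}(\mathrm{ord}(g)\mid\xi)\,\tilde S_{n-1}(g)$ where
\[\tilde S_m(g):=\sum_{u\in G^m}\ \prod_{i=1}^m\mathbb{P}\Big(\xi_0 g+\textstyle\sum_{j=1}^i\xi_ju_j=0\Big)\]
with fresh i.i.d.\ copies of $\xi$ in each row. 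Thus $N_n(G)=N_{n-1}(G)+R_n(G)$ with $R_n(G):=\sum_{g\neq0}\mathbb{P}(\mathrm{ord}(g)\mid\xi)\,\tilde S_{n-1}(g)\ge0$, hence $N_n(G)=1+\sum_{k=1}^nR_k(G)$, and it suffices to show $R_n(G)\sim\chi_0^{\ell}\,\MC(G)\,n^{\ell-1}/(\ell-1)!$, since summing then yields $N_n(G)\sim\chi_0^{\ell}\MC(G)\,n^{\ell}/\ell!$.

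\textbf{The induction.} For $\ell=0$ this is trivial, and for $\ell=1$ it is exactly Lemma~\ref{lemmachi0exists}, which gives $R_n(\mathbb{F}_p)=\mathbb{E}\,|\{v\in\mathbb{F}_p^n:v_1\neq0,\ L_nv=0\}|\to\chi_0$. The key to the inductive step is a \emph{reduction lemma}: for $g$ of order $p$ with $C:=\langle g\rangle\cong\mathbb{F}_p$,
\[\tilde S_m(g)\ \sim\ A_\infty\cdot N_m(G/C)\qquad(m\to\infty),\]
where $A_\infty:=\lim_{m\to\infty}\mathbb{E}\,|\{a\in\mathbb{F}_p^m:\overline{L}_ma=-\overline{\xi}\}|$, with $\overline{L}_m$ an $m\times m$ lower-triangular matrix with i.i.d.\ entries distributed as $\xi\bmod p$ and $\overline{\xi}$ an independent vector with i.i.d.\ coordinates distributed as $\xi\bmod p$; thus $A_\infty$ depends only on $\xi\bmod p$, and comparing the $G=\mathbb{F}_p$ instance of the lemma with the $\ell=1$ case forces $A_\infty=\chi_0/((p-1)\mathbb{P}(p\mid\xi))$ (this also establishes the existence of the limit). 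One also needs the crude bound $\tilde S_m(g)=O_g\big(m^{\ell-\log_p|\langle g\rangle|}\big)$, so that elements $g$ of order $>p$ contribute only $o(n^{\ell-1})$ to $R_n(G)$. Granting these, and using the inductive hypothesis $N_m(G/C)\sim\chi_0^{\ell-1}\MC(G/C)\,m^{\ell-1}/(\ell-1)!$, the fact that each minimal subgroup $C\le G$ has $p-1$ generators, and the recursion $\MC(G)=\sum_{|C|=p}\MC(G/C)$, we obtain
\[R_n(G)\ \sim\ \mathbb{P}(p\mid\xi)\,(p-1)\,A_\infty\!\!\sum_{|C|=p}\!\!N_{n-1}(G/C)\ \sim\ \chi_0\cdot\frac{\chi_0^{\ell-1}\MC(G)}{(\ell-1)!}\,n^{\ell-1},\]
which closes the induction.

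\textbf{The main obstacle.} Everything reduces to the reduction lemma. Projecting along $G\to G/C$, one writes $\tilde S_m(g)=\mathbb{E}\big[X_mY_m\big]$, where $X_m=|\{a\in\mathbb{F}_p^m:\overline{L}_ma=-\overline{\xi}\}|$ encodes the ``fiber'' equations in $C\cong\mathbb{F}_p$ and $Y_m=|\{\bar u\in(G/C)^m:L_m\bar u=0\}|$ encodes the equations in $G/C$, for \emph{one and the same} random matrix $L_m$ (when $C$ is not a direct summand of $G$, one works instead with lifts of a transversal of $C$, which only contributes lower-order corrections). Since $X_m$ depends on $L_m\bmod p$ and $Y_m$ on $L_m\bmod\exp(G/C)$, the two factors are correlated, and the crux is to show that this correlation is asymptotically negligible, i.e.\ $\mathbb{E}[X_mY_m]=(A_\infty+o(1))\,\mathbb{E}[Y_m]=(A_\infty+o(1))\,N_m(G/C)$. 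This is where discrete Fourier analysis enters: one shows that the conditional mean $\mathbb{E}[X_m\mid L_m\bmod p]$ concentrates around $A_\infty$ with overwhelming probability, exploiting the uniform estimate $\sup\{|\mathbb{E}\,\omega^{\xi}|:\omega^{p^{M}}=1,\ \omega\neq1\}<1$ (where $p^M=\exp(G)$), which controls the rate at which the running convolutions governing $X_m$ equilibrate; combined with the a priori polynomial upper bound $N_m(H)=O_H\big(m^{\log_p|H|}\big)$ (needed also to make the higher-order-$g$ terms, the non-split corrections, and the error in the Cesàro summation genuinely lower order), this yields the lemma. The remaining steps — the first-coordinate recursion, the identity $N_n(G)=1+\sum_{k\le n}R_k(G)$, the elementary estimate $\sum_{k\le n}k^{\ell-1}\sim n^{\ell}/\ell$, and the combinatorial recursion for $\MC$ — are routine.
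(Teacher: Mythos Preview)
Your high-level strategy is different from the paper's and, if completed, would yield the theorem. The paper does not induct on $\ell$; it decomposes $G^n$ directly by the ``skeleton'' of $v$ (the positions $n_1<\dots<n_{k+1}$ where the subgroup generated by $v_1,\dots,v_j$ jumps), shows via $\kappa>1$ moment bounds (Lemmas~\ref{ZnLemma} and~\ref{RLemma}) that only length-$\ell$ well-separated skeletons with dense blocks contribute, and then proves that for such $v$ the probability $\mathbb{P}(L_nv=0)=\prod_i\tau(v_{\le i})$ factorizes row-by-row into $\ell$ independent $\mathbb{F}_p$-problems (Lemmas~\ref{LemmaforU} and~\ref{probdecompose}). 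Each factor gives $\chi_0$, the number of maximal chains gives $\MC(G)$, and the count of index tuples gives $n^\ell/\ell!$.

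Your inductive route via $N_n(G)=N_{n-1}(G)+R_n(G)$ is a legitimate reorganization, but the proposal is not a proof because the entire difficulty is packed into the ``reduction lemma'' $\tilde S_m(g)\sim A_\infty\,N_m(G/C)$, which you only sketch. Two concrete issues:
\begin{enumerate}
\item[(i)] In the split case $G=C\oplus G'$ you correctly write $\tilde S_m(g)=\mathbb{E}[X_mY_m]$, but $X_m$ and $Y_m$ are \emph{positively} correlated through $\corank(\overline L_m)$: both blow up when the mod-$p$ kernel is large. Even if $\mathbb{E}[X_m\mid \overline L_m]$ concentrates near $A_\infty$ on a set of probability $1-o(1)$, controlling $\mathbb{E}[\mathbbm{1}_E\,X_mY_m]$ on the exceptional set $E$ requires exactly the kind of higher-moment bounds on $Y_m$ (and on the skeleton products) that the paper proves in Lemmas~\ref{ZnLemma} and~\ref{RLemma}. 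You assume the polynomial bound $N_m(H)=O(m^{\log_p|H|})$ and the crude bound for higher-order $g$, but these are nontrivial consequences of that same machinery, not inputs you get for free.
\item[(ii)] The concentration claim itself is not evident. A short Fourier computation gives $\mathbb{E}[X_m\mid \overline L_m]=\sum_{t\in\ker \overline L_m^{\,T}}\prod_i\widehat\alpha(t_i)$, which depends on the support profile of the (random, $\sim\log_p m$-dimensional) left kernel of $\overline L_m$; showing this stabilizes to $A_\infty$ is a real statement about sparse vectors in that kernel, not a consequence of the single character bound $|\mathbb{E}\,\omega^{\xi}|<1$. And in the non-split case (e.g.\ $G=\mathbb{Z}/p^2$, $C=pG$) the fiber equation for $c$ acquires a term $(L_mu^0)/p$ depending on $L_m\bmod p^2$, so your claimed factorization $\mathbb{E}[X_mY_m]$ does not hold even as written; ``lower-order corrections'' here is a genuine gap.
\end{enumerate}

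In short: the inductive framework is sound and the combinatorial bookkeeping ($\MC(G)=\sum_{|C|=p}\MC(G/C)$, the Ces\`aro step) is fine, but the decorrelation that makes the reduction lemma true is where the analysis lives, and your sketch does not supply it. The paper sidesteps this by factorizing $\mathbb{P}(L_nv=0)$ deterministically for each ``good'' $v$ rather than attempting a probabilistic decoupling of $X_m$ and $Y_m$; that is what buys it a cleaner proof.
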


\section{Calculating the moments}

This section is devoted to the proof of Theorem~\ref{thmmoment}.

Let $H$ be a finite abelian $p$-group. Throughout this section, constants are allowed to depend on $H$ and $\xi$, but not on anything else.  

For a $v\in H^m$ and a character $\varrho\in \widehat{H}=\Hom(H,\mathbb{C}^*)$, we define
\[\|v\|_\varrho=|\{1\le i\le m\,:\,v_i\notin \ker \varrho\}|.\]

We also define
\[\tau(v)=\mathbb{P}\left(\sum_{i=1}^m \xi_i v_i=0\right),\]
where $\xi_1,\xi_2,\dots,\xi_m$ are i.i.d. copies of $\xi$.

We use $1$ to denote the trivial character in $\widehat{H}$.
\begin{lemma}\label{Fourierestimate}
There is a constant $c>0$, such that for any $m$ and any $v\in H^m$, if $g\in H$ is a random variable independent from $\xi_1,\dots,\xi_m$, then 
\begin{equation*}\left||H|\mathbb{P}\left(\sum_{i=1}^m \xi_i v_i=g\right)-1\right|\le \sum_{1\neq \varrho \in \widehat{H}} \exp\left(-c\|v\|_\varrho\right).\end{equation*}
In particular,
\[\Big||H|\tau(v)-1\Big|\le \sum_{1\neq \varrho \in \widehat{H}} \exp\left(-c\|v\|_\varrho\right).\]

\end{lemma}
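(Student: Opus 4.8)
The plan is to prove Lemma~\ref{Fourierestimate} by Fourier analysis on the finite abelian group $H$. The probability $\mathbb{P}(\sum_i \xi_i v_i = g)$, where $g$ is random and independent of the $\xi_i$, is an average; but in fact it suffices to handle a deterministic $g$ and then average, since the claimed bound on the right-hand side does not depend on $g$. So fix $g \in H$. By Fourier inversion on $H$,
\[
\mathbb{P}\left(\sum_{i=1}^m \xi_i v_i = g\right) = \frac{1}{|H|}\sum_{\varrho\in\widehat H} \overline{\varrho(g)}\,\mathbb{E}\,\varrho\!\left(\sum_{i=1}^m \xi_i v_i\right) = \frac{1}{|H|}\sum_{\varrho\in\widehat H}\overline{\varrho(g)}\prod_{i=1}^m \mathbb{E}\,\varrho(\xi_i v_i),
\]
using independence of the $\xi_i$ in the last step. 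The $\varrho = 1$ term contributes exactly $1/|H|$, which matches the "$1$" we subtract after multiplying by $|H|$. Hence
\[
\left| |H|\,\mathbb{P}\left(\sum_i \xi_i v_i = g\right) - 1\right| \le \sum_{1\neq\varrho\in\widehat H}\ \prod_{i=1}^m \left|\mathbb{E}\,\varrho(\xi v_i)\right|,
\]
where $\xi$ is a single copy of our variable (the $g$-phase has modulus $1$ and drops out).

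The crux is then to show that each single-coordinate factor $|\mathbb{E}\,\varrho(\xi v_i)|$ is at most some constant $\rho < 1$ whenever $v_i \notin \ker\varrho$, and is $\le 1$ always (trivially, since $\varrho$ takes values on the unit circle). Granting this, $\prod_i |\mathbb{E}\,\varrho(\xi v_i)| \le \rho^{\|v\|_\varrho}$, and choosing $c = -\log\rho > 0$ (uniform over the finitely many nontrivial $\varrho$ and finitely many group elements $w \notin \ker\varrho$ — this is where finiteness of $H$ is used) gives the bound $\sum_{1\neq\varrho}\exp(-c\|v\|_\varrho)$. The "in particular" statement for $\tau(v)$ is the special case $g = 0$ (deterministic). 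To see that the factor is strictly less than $1$: if $v_i \notin \ker\varrho$ then $\varrho(v_i)$ is a nontrivial root of unity, say of order $q = p^t > 1$, so $\varrho(\xi v_i) = \varrho(v_i)^{\xi}$ depends only on $\xi \bmod q$, and $\mathbb{E}\,\varrho(v_i)^\xi$ is a convex combination of the $q$-th roots of unity with weights $\mathbb{P}(\xi \equiv a \bmod q)$. This has modulus $1$ only if all the mass sits on a single residue class mod $q$; but condition~\eqref{xicond} says $0 < \mathbb{P}(p \mid \xi) < 1$, so $\xi$ is not concentrated on one class mod $p$, hence not mod $q$ either (as $p \mid q$). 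Therefore the modulus is $<1$, and taking the max over the finitely many pairs $(\varrho, w)$ with $w \notin \ker\varrho$ yields the desired uniform $\rho$.

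I do not anticipate a serious obstacle here; the argument is a standard character-sum / "small ball via non-lattice" estimate. The only point requiring a little care is the reduction from "random $g$" to "fixed $g$" (handled by noting the RHS is $g$-free, so one can condition on $g$ and apply the fixed-$g$ bound, or equivalently absorb $\overline{\varrho(g)}$ into the triangle inequality before taking expectation over $g$), and making explicit that the constant $c$ is legitimately allowed to depend on $H$ and $\xi$ — which the running convention of this section permits. One should also double-check the harmless abuse that $\mathbb{E}\,\varrho(\xi v_i)$ is well-defined and the product formula is valid, i.e. that the $\xi_i$ are genuinely i.i.d., which is given.
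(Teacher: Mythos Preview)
Your proof is correct and follows essentially the same approach as the paper: Fourier inversion on $H$ isolates the trivial character as the ``1'' term, then the triangle inequality and a uniform bound $|\mathbb{E}\,\varrho(\xi v_i)|\le e^{-c}$ for $v_i\notin\ker\varrho$ yield the result. The only cosmetic differences are that the paper keeps $g$ random throughout (absorbing it into a factor $\mathbb{E}\varrho(-g)$ of modulus $\le 1$) rather than conditioning, and cites \cite[Lemma 2.2]{wood2019random} for the strict inequality where you instead supply the direct convex-combination-of-roots-of-unity argument.
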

\begin{proof}
Combining the inversion formula for the discrete Fourier transform and the fact that $g,\xi_1,\xi_2,\dots,\xi_m$ are independent, we obtain that
\begin{equation}\label{eqt1}
    |H|\mathbb{P}\left(\sum_{i=1}^m \xi_i v_i=g\right)=\sum_{\varrho\in \widehat{H}}\mathbb{E}\varrho\left(\sum_{i=1}^m \xi_i v_i-g\right)=1+\sum_{1\neq\varrho\in \widehat{H}}\left(\mathbb{E}\varrho(-g)\right)\prod_{i=1}^m\mathbb{E}\varrho\left( \xi_i v_i\right).
\end{equation}
By \cite[Lemma 2.2.]{wood2019random} and \eqref{xicond}, we have a $c>0$ such that 
\[\left|\mathbb{E}\varrho\left( \xi_i v_i\right)\right|\le \exp(-c)\]
whenever $v_i\notin\ker \varrho$, and in all other cases $\mathbb{E}\varrho\left( \xi_i v_i\right)=1$. Combining this with \eqref{eqt1}, the statement follows.
\end{proof}

Let $w\in H^h$. We assume that the components of $w$ generate $H$.

Let $X=(X_1,X_2,\dots,)\in H^{\mathbb{N}}$ be a random vector such that $X_i=w_i$ for all $1\le i\le h$ and $X_{h+1},X_{h+2},X_{h+3},\dots$ are independent uniform random elements of $H$.

For a vector $v$ in $H^m$ or $H^{\mathbb{N}}$, let $v_{\le i}$ be the prefix of $v$ of length $i$.

We define
\[Z_i=\prod_{j=h+1}^{h+i} |H| \tau(X_{\le j})\quad\text{ and }\quad Z_\infty=\prod_{j=h+1}^{\infty} |H| \tau(X_{\le j}).\]

\begin{lemma}\hfill\label{ZnLemma}
\begin{enumerate}[(a)]
    \item 
    There are constants $K=K_{H,\xi}<\infty$ and $\kappa_0=\kappa_{0,H,\xi}>1$ not depending on $w$ such that 
    \[\mathbb{E}\left(\sup_{i\ge 1} Z_i\right)^{\kappa}<K\]
    for all $1\le \kappa\le \kappa_0$.

    \item With probability $1$, $Z_\infty$ is well defined and finite. Moreover, $Z_i$ converge to $Z_\infty$ almost surely.

    \item Finally,
    \[\lim_{i\to \infty}\mathbb{E}Z_i=\mathbb{E}Z_\infty.\]
\end{enumerate}    
\end{lemma}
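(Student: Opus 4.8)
The whole lemma is really a statement about a nonnegative martingale-like process, so the plan is to first establish the key estimate that $|H|\tau(X_{\le j})$ is close to $1$ with good tail control, and then feed this into standard martingale machinery. The crucial quantitative input is Lemma~\ref{Fourierestimate}: applied with $v=X_{\le j}$ (and $g=0$), it gives
\[\bigl||H|\tau(X_{\le j})-1\bigr|\le\sum_{1\neq\varrho\in\widehat H}\exp(-c\|X_{\le j}\|_\varrho).\]
For a fixed nontrivial $\varrho$, once $j>h$ each new coordinate $X_i$ ($i>h$) is uniform on $H$, so it lies outside $\ker\varrho$ with probability $1-|\ker\varrho|/|H|\ge 1-1/p>0$; hence $\|X_{\le j}\|_\varrho$ stochastically dominates a Binomial$(j-h,1-1/p)$ and in particular grows linearly in $j$ with exponential concentration. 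First I would record this: there are constants $c',C$ (depending only on $H,\xi$, not on $w$, since the bound only uses coordinates past position $h$) such that $\mathbb{E}\,\bigl||H|\tau(X_{\le j})-1\bigr|^2\le C\exp(-c'(j-h))$, and more generally that $|H|\tau(X_{\le j})-1$ has exponentially decaying moments of every fixed order, uniformly in the conditioning on $X_{\le j-1}$.

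With that in hand, part (a): write $Z_i=\prod_{j=h+1}^{h+i}(1+\varepsilon_j)$ where $\varepsilon_j=|H|\tau(X_{\le j})-1$. A clean way is to pass to logarithms where defined and bound $\log Z_i=\sum_{j}\log(1+\varepsilon_j)$; since $\varepsilon_j$ is small with overwhelming probability and summable, $\sup_i|\log Z_i|$ has finite exponential moments of small order, giving $\mathbb{E}(\sup_i Z_i^\kappa)<K$ for $\kappa$ in a neighborhood of $1$. Slightly more carefully, I would note $Z_i$ need not be a martingale (since $\tau$ uses $X_{\le j}$, not a filtration step that leaves the mean fixed), so rather than Doob I would use the brute-force bound $\sup_i Z_i\le\prod_{j=h+1}^\infty(1+|\varepsilon_j|)=\exp\bigl(\sum_j\log(1+|\varepsilon_j|)\bigr)\le\exp(\sum_j|\varepsilon_j|)$ and then control $\mathbb{E}\exp(\kappa\sum_j|\varepsilon_j|)$ via the exponential tail of each $|\varepsilon_j|$ together with a Borel--Cantelli / dominated-sum argument (e.g. splitting $|\varepsilon_j|$ at a threshold and using that $\sum_j\exp(-c'(j-h))<\infty$). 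The uniformity in $w$ is automatic because all the estimates only ever invoke the randomness of $X_i$ for $i>h$.

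Part (b) is then essentially free: $\sum_j|\varepsilon_j|<\infty$ almost surely (by the first Borel--Cantelli lemma, since $\mathbb{P}(|\varepsilon_j|>2^{-(j-h)})$ is summable, say), so the infinite product $\prod_j(1+\varepsilon_j)$ converges absolutely to a finite limit $Z_\infty$, and the partial products $Z_i$ converge to it almost surely; one should also check $Z_\infty$ is a.s.\ finite and (as it will be needed later, though not asserted here) a.s.\ positive, the latter because infinitely many $\varepsilon_j=-1$ would contradict summability. Part (c) follows from (a) and (b) by uniform integrability: (a) gives $\sup_i\mathbb{E}Z_i^{\kappa_0}<\infty$ with $\kappa_0>1$, so the family $(Z_i)$ is uniformly integrable, and combined with the a.s.\ convergence $Z_i\to Z_\infty$ of (b) this yields $\mathbb{E}Z_i\to\mathbb{E}Z_\infty$ (Vitali convergence theorem). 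The main obstacle is part (a): getting the exponential-moment bound on $\sup_i Z_i$ \emph{uniformly in $w$}, which forces one to be careful that Lemma~\ref{Fourierestimate}'s bound on $|\varepsilon_j|$ depends only on $\|X_{\le j}\|_\varrho$ and that the latter can be lower-bounded in distribution using only the uniform coordinates $X_{h+1},\dots,X_j$, irrespective of what $w$ was; once that is set up, everything else is routine.
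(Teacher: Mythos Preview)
Your approach to parts (b) and (c) is sound and essentially matches the paper (almost sure convergence of the product from summability of $|\varepsilon_j|$, then dominated convergence/uniform integrability from (a)). The problem is in part (a): the inequality $\sup_i Z_i\le\exp\bigl(\sum_j|\varepsilon_j|\bigr)$ is too crude, and the quantity $\mathbb{E}\exp(\kappa\sum_j|\varepsilon_j|)$ that you propose to control can be infinite even for $\kappa=1$.

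Concretely, take $H=\mathbb{F}_2$, $h=1$, $w_1=1$, and $\mathbb{P}(\xi\equiv 0\bmod 2)=\alpha$ with $\alpha=0.9$. Then $2\tau(v)=1+(2\alpha-1)^{\|v\|}$ exactly, so $|\varepsilon_{1+i}|=(0.8)^{\|X_{\le 1+i}\|}$. On the event $X_2=\cdots=X_{1+t}=0$ (probability $2^{-t}$) we have $\|X_{\le 1+i}\|=1$ for all $i\le t$, hence $\sum_{i\le t}|\varepsilon_{1+i}|=0.8\,t$, and $\mathbb{E}\exp(\sum_j|\varepsilon_j|)\ge 2^{-t}e^{0.8t}\to\infty$ since $0.8>\log 2$. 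Meanwhile the actual target $\mathbb{E}(\sup_i Z_i)^\kappa$ \emph{is} finite for $\kappa$ just above $1$: on the same bad event $Z_t=(1.8)^t$, and $(1.8)^{\kappa}\cdot 2^{-1}<1$ for $\kappa<\log 2/\log 1.8\approx 1.18$. The step $1+x\le e^x$ discards precisely the margin you need. No ``splitting at a threshold'' or Borel--Cantelli argument can repair this, because the $|\varepsilon_j|$ are strongly positively correlated: on the bad event they are all simultaneously large.

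The missing idea is a cancellation: when $|H|\tau(X_{\le h+i})$ is large it is because the random coordinates so far have mostly fallen inside a proper subgroup $H_i\subsetneq H$, and the factor $\approx|H|/|H_i|$ in the product is compensated by the probability $\approx|H_i|/|H|$ of that event. The paper implements this by introducing random subgroups $H_i$, proving $|H|\tau(X_{\le h+i})\le |H|/|H_i|-\mathbbm{1}(H_i\neq H)(1-e^{-c})$ (the strict improvement over $|H|/|H_i|$ uses that $w$ generates $H$, forcing $\|X_{\le h+i}\|_\varrho\ge 1$ for some $\varrho$ with $H_i\subseteq\ker\varrho$), and then pairing $\prod_i|H|/|H_i|$ against the probability that the chain $(H_i)$ equals a prescribed chain $(C_i)$. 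Note that the hypothesis on $w$ is genuinely needed: if $w=0$ then on the event $X_{h+1}=\cdots=X_{h+t}=0$ (probability $|H|^{-t}$) one has $Z_t=|H|^t$, so $\mathbb{E}Z_t^\kappa\ge|H|^{(\kappa-1)t}\to\infty$ for every $\kappa>1$. Your remark that ``uniformity in $w$ is automatic because all the estimates only ever invoke the randomness of $X_i$ for $i>h$'' is therefore a warning sign that the argument cannot be correct as written.
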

\begin{proof}
    Let us choose $\varepsilon>0$ small enough. Let $1\neq \varrho\in \widehat{H}$. We define
    \[T_\varrho=\sup\left(\left\{i\ge 1\,:\,\left\|X_{\le h+i}\right\|_\varrho\le \varepsilon i\right\}\cup \{0\}\right).\]
    Since $\mathbb{P}(X_{h+j}\notin \ker \varrho)\ge \frac{p-1}p$, it follows from the law of large numbers that $T_\varrho$ is almost surely finite provided that $\varepsilon$ is small enough.

    We define
    \[H_i=\bigcap_{\substack{1\neq \varrho\in \widehat{H}\\T_\varrho\ge i}} \ker\varrho,\]
    where the intersection over the empty set is defined to be $H$.

    By Lemma~\ref{Fourierestimate}, we have
    \begin{align}\label{estasp}
    |H|\tau(X_{\le h+i})&\le 1+\sum_{1\neq \varrho \in \widehat{H}} \exp\left(-c\|X_{\le h+i}\|_\varrho\right)\nonumber\\&\le \left(1+\sum_{\substack{1\neq \varrho \in \widehat{H}\\H_i\subseteq\ker \varrho}} \exp\left(-c\|X_{\le h+i}\|_\varrho\right)\right)\left(1+\sum_{\substack{1\neq \varrho \in \widehat{H}\\H_i\not\subseteq\ker \varrho}} \exp\left(-c\|X_{\le h+i}\|_\varrho\right)\right).
    \end{align}

    Note that if $H_i\not\subseteq\ker \varrho$, then $T_\varrho<i$, so $\|X_{\le h+i}\|_\varrho>\varepsilon i$. Therefore,
    \begin{equation}\label{estp1}
        1+\sum_{\substack{1\le \varrho \in \widehat{H}\\H_i\not\subseteq\ker \varrho}} \exp\left(-c\|X_{\le h+i}\|_\varrho\right)\le 1+|H|\exp(-c\varepsilon i)\le \exp\left(|H|\exp(-c\varepsilon i)\right).
    \end{equation}

    On the event $H_i=H$, we trivially have
    \begin{equation}\label{esttriv}1+\sum_{\substack{1\neq \varrho \in \widehat{H}\\H_i\subseteq\ker \varrho}} \exp\left(-c\|X_{\le h+i}\|_\varrho\right)=1.\end{equation}

    Now assume that $H_i\neq H$. In this case, there is a one-to-one correspondence between the sets $\{\varrho \in \widehat{H}\,:\,H_i\subseteq\ker \varrho\}$ and $\widehat{H/H_i}$. Thus, \begin{equation}\label{charcount}|\{\varrho \in \widehat{H}\,:\,H_i\subseteq\ker \varrho\}|=|H|/|H_i|.\end{equation}
    
    Since the components of $w$ generate $H$, we have a $j\in\{1,2,\dots,h\}$ such that $w_j\notin H_i$. Then, we can find a $\varrho\in \widehat{H}$ such that $H_i\subseteq \ker \varrho$ and $w_j\notin \ker \varrho$. For such a $\varrho$, we have  $\|X_{\le h+i}\|_\varrho\ge 1$. Combining this with \eqref{charcount}, we obtain that

    \begin{equation}\label{estnontriv}1+\sum_{\substack{1\neq \varrho \in \widehat{H}\\H_i\subseteq\ker \varrho}} \exp\left(-c\|X_{\le h+i}\|_\varrho\right)\le \frac{|H|}{|H_i|}-1+\exp(-c).
    \end{equation}

    Combining \eqref{estasp}, \eqref{estp1}, \eqref{esttriv} and \eqref{estnontriv}, we obtain that
    \begin{equation}\label{tauest}|H|\tau(X_{\le h+i})\le \left(\frac{|H|}{|H_i|}-\mathbbm{1}(H_i\neq H)(1-\exp(-c))\right) \exp\left(|H|\exp(-c\varepsilon i)\right).\end{equation}

    Introducing the notation $T_{\max}=\max_{1\neq \varrho\in \widehat{H}} T_\varrho$, it follows from \eqref{tauest} that
    \[\sup_{i\ge 1}Z_i\le K_0 \prod_{i=1}^{T_{\max}} \left(\frac{|H|}{|H_i|}-1+\exp(-c)\right),\]
    where $K_0$ is a constant defined as
    \[K_0=\prod_{i=1}^\infty \exp\left(|H|\exp(-c\varepsilon i)\right)=\exp\left(\frac{|H|\exp(-c\varepsilon )}{1-\exp(-c\varepsilon)}\right).\]

    If we choose $\kappa_0$ small enough then there is a constant $0<\gamma<1$ such that for all $1\le \kappa\le \kappa_0$ and for all subgroup $H'\neq H$ of $H$, we have
    \[\left(\frac{|H|}{|H'|}-1+\exp(-c)\right)^\kappa\le \gamma^2 \frac{|H|}{|H'|}. \]

    Thus, for all $1\le \kappa\le \kappa_0$, we have
    \begin{equation}\label{supest}\left(\sup_{i\ge 1}Z_i\right)^\kappa\le K_0^{\kappa_0} \gamma^{2T_{\max}} \prod_{i=1}^{T_{\max}} \frac{|H|}{|H_i|}.
    \end{equation}
    Note that the choice of $\gamma$ only depends on $H$ and $\xi$, but not on $\varepsilon$.

    Let $t$ be a positive integer and let $C_1\subseteq C_2\subseteq\cdots\subseteq C_t\neq H$ be a chain of subgroups of $H$.

    \begin{lemma}\label{lemmasupZgivenchain}
        Let $\mathcal{A}$ be the event that $T_{\max}=t$ and $H_i=C_i$ for all $1\le i\le t$. Provided  that $\varepsilon$ is small enough, we have
        \[\mathbb{P}(\mathcal{A})\le \gamma^{-t}\prod_{i=1}^t \frac{|C_i|}{|H|}.\]
        Moreover, for all $1\le \kappa\le \kappa_0$, we have
        \[\mathbb{E}\mathbbm{1}(\mathcal{A}) \left(\sup_{i\ge 1} Z_i\right)^\kappa\le \gamma^{t}K_0^{\kappa_0}.\]
    \end{lemma}
    \begin{proof}
        For $1\neq \varrho\in \widehat{H}$, let
        \[B_{\varrho}=\{1\le j\le T_\varrho\,:\,X_{h+j}\notin \ker \varrho \}.\]
         On the event $\mathcal{A}$, if $X_{h+j}\notin C_j$ for some $1\le j\le t$, then there is a $1\neq \varrho\in \widehat{H}$ such that $T_\varrho\ge j$ and $X_{h+j}\notin \ker \varrho$. In particular, $j\in B_\varrho$.

        Therefore, we just proved that on the event $\mathcal{A}$, we have \[X_{h+j}\in C_j\text{ for all }j\in \{1,2,\dots,t\}\setminus \bigcup_{1\neq\varrho \in \widehat{H}} B_\varrho.\] Observe that on the event $\mathcal{A}$, we have
        \[\left|\bigcup_{1\neq\varrho \in \widehat{H}} B_\varrho\right|\le \sum_{1\neq\varrho \in \widehat{H}} \varepsilon T_\varrho\le \varepsilon|H|t. \]

        Note that assuming that $\varepsilon$ small enough the right hand side is less than $t$.
        
        Below, $\sum_B$ will stand for a summation over all $\lfloor  \varepsilon|H|t \rfloor$ element subsets $B$ of $\{1,2,\dots,t\}$.

        It follows from the discussion above that
        \begin{align*}
            \mathbb{P}&(T_{\max}=t\text{ and }H_i=C_i\text{ for all }1\le i\le t)\\&\le \sum_B \mathbb{P}(X_{h+j}\in C_j\text{ for all }j\in \{1,\dots,t\}\setminus B)\\&
            = \sum_B \prod_{j\in \{1,\dots,t\}\setminus B} \frac{|C_j|}{|H|}\\&
            \le {{t}\choose{ \lfloor\varepsilon|H|t\rfloor}} |H|^ {\varepsilon|H|t} \prod_{j=1}^t \frac{|C_j|}{|H|}. 
    \end{align*}
By choosing $\varepsilon$ small enough we can achieve that ${{t}\choose{ \lfloor\varepsilon|H|t\rfloor}} |H|^ {\varepsilon|H|t}\le \gamma^{-t}$. Thus, the first statement follows. The second statement follows by combining the first one with~\eqref{supest}.
\end{proof}

It is easy to see that there is a constant $c_2$ such that the number of chains of subgroups $C_1\subseteq C_2\subseteq\cdots\subseteq C_t$ is at most $c_2(t+1)^{\log_p|H|}$. Combining this estimate with Lemma~\ref{lemmasupZgivenchain}, we get that 
\[\mathbb{E}\mathbbm{1}(T_{\max}=t)\left(\sup_{i\ge 1} Z_i\right)^\kappa\le c_2 (t+1)^{\log_p|H|}\gamma^{t}K_0^{\kappa_0}.\]

Note that this estimate also true for $t=0$. Thus,
\[\mathbb{E}\left(\sup_{i\ge 1} Z_i\right)^\kappa=\sum_{t=0}^\infty\mathbb{E}\mathbbm{1}(T_{\max}=t)\left(\sup_{i\ge 1} Z_i\right)^\kappa\le\sum_{t=0}^\infty c_2 (t+1)^{\log_p|H|}\gamma^{t}K_0^{\kappa_0}<\infty.\]
Therefore, part (a) of the lemma follows.

Let $i_0$ be the smallest $i$ such that $1-|H|\exp(-c\varepsilon i)>0$.

Assuming that $\ell> i> \max(T_{\max},i_0)$, by Lemma~\ref{Fourierestimate}, we have
\[\prod_{j=i+1}^\ell(1-|H|\exp(-c\varepsilon j))\le \frac{Z_\ell}{Z_i}\le \prod_{j=i+1}^\ell (1+|H|\exp(-c\varepsilon j)).\]
Thus, it follows easily that with probability $1$, $Z_\infty$ is defined and finite. Moreover, $Z_i$ converges to $Z_\infty$ almost surely. Thus, part (b) follows.

    Finally, by parts (a) and (b), we can apply the dominated convergence theorem to obtain that
    \[\lim_{i\to \infty}\mathbb{E}Z_i=\mathbb{E}Z_\infty.\qedhere\]
   \end{proof}

Let $G$ be a finite abelian $p$-group. Let $0\le n_1<n_2<\cdots<n_{k+1}$ be a sequence of integers, and let $g_1,g_2,\dots,g_{k}\in G$. We define $G_0$ as $\{0\}$ and $G_i$ as the subgroup of $G$ generated by $g_1,g_2,\dots,g_i$. Let $\V{n_1,\dots,n_{k+1}}{g_1,\dots,g_{k}}$ be the set of vectors $v\in G^{n_{k+1}}$ satisfying
\begin{itemize}
\item $v_{n_i+1}=g_i$ for all $i=1,2,\dots,k$;
\item $v_j\in G_i$ for all $i=0,1,\dots,k$ and $n_{i}+1<j\le n_{i+1}$.
\end{itemize}
Here we defined $n_0$ to be $-1$.

Note that
\[\left|\V{n_1,\dots,n_{k+1}}{g_1,\dots,g_{k}}\right|=\prod_{i=0}^{k}|G_i|^{n_{i+1}-n_i-1}.\]

Let $X=(X_1,\dots,X_{n_{k+1}})$ be a uniform random element of $\V{n_1,\dots,n_{k+1}}{g_1,\dots,g_{k}}$. Let
\[R=\left(\prod_{i=0}^{k}|G_i|^{n_{i+1}-n_i-1}\right)\prod_{i=1}^{n_{k+1}}\tau(X_{\le i}).\]

Note that for any $v\in G^n$, we have
\begin{equation}\label{randomvscounting0}\mathbb{P}(L_nv=0)=\prod_{i=1}^n \tau(v_{\le i}).\end{equation}
Therefore,
\begin{equation}\label{randomvscounting}\mathbb{E}R=\mathbb{E}\left|\{v\in \V{n_1,\dots,n_{k+1}}{g_1,\dots,g_{k}}\,:\,L_{n_{k+1}}v=0\}\right|.\end{equation}


Let 
\[M=\max_{H\le G} K_{H,\xi}\quad\text{ and }\quad\kappa_1= \min_{H\le G} \kappa_{0,H,\xi},\]
where the constants $K_{H,\xi}$ and $\kappa_{0,H,\xi}$ are provided by Lemma~\ref{ZnLemma}.
\begin{lemma}\label{RLemma}
For any $1\le\kappa\le \kappa_1$, we have
\[\mathbb{E}R^{\kappa}\le M^{k}.\]
\end{lemma}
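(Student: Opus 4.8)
The plan is to prove the bound $\mathbb{E}R^\kappa\le M^k$ by induction on $k$, peeling off the last "block" of coordinates (those indexed from $n_k+2$ through $n_{k+1}$, together with the prescribed entry $v_{n_k+1}=g_k$) and recognizing the conditional expectation over that block as exactly a quantity controlled by Lemma~\ref{ZnLemma}(a). Concretely, condition on the prefix $X_{\le n_k+1}$. Given this prefix, the coordinates $X_{n_k+2},\dots,X_{n_{k+1}}$ are independent uniform elements of $G_k$, while $X_{n_k+1}=g_k$ is fixed; and the factor $R$ factors as
\[R=R'\cdot\left(|G_k|^{n_{k+1}-n_k-1}\prod_{i=n_k+2}^{n_{k+1}}\tau(X_{\le i})\right)\cdot\prod_{i=1}^{n_k+1}\frac{\tau(X_{\le i})}{\text{(already in }R'\text{)}},\]
so I need to set up the bookkeeping so that $R'$ is the analogue of $R$ for the parameters $n_1,\dots,n_k$ and the group data $g_1,\dots,g_{k-1}$ (after truncating the last block), and the middle factor is of the form $\prod_{j=1}^{\,n_{k+1}-n_k-1}|G_k|\,\tau(X_{\le n_k+1+j})$ with $X_{\le n_k+1}$ playing the role of the generating tuple $w\in H^h$ in Lemma~\ref{ZnLemma}, where $H=G_k$ and $h=n_k+1$. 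The point is that the components of this prefix do generate $G_k$ (since $G_k$ is generated by $g_1,\dots,g_k$, all of which appear among the first $n_k+1$ coordinates by construction of $\V{}{}$), so Lemma~\ref{ZnLemma} applies.

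The key steps, in order: (1) Fix $\kappa\in[1,\kappa_1]$. Condition on $\mathcal{F}=\sigma(X_{\le n_k+1})$ and write $R^\kappa=(R')^\kappa\cdot Z^\kappa$ where $Z=|G_k|^{n_{k+1}-n_k-1}\prod_{i=n_k+2}^{n_{k+1}}\tau(X_{\le i})$. Note $R'$ is $\mathcal{F}$-measurable. (2) Observe $0<Z\le \sup_{i\ge1}Z_i$ in the notation of Lemma~\ref{ZnLemma} applied with $H=G_k$ and generating tuple $w=X_{\le n_k+1}$; hence $\mathbb{E}\big[Z^\kappa\mid\mathcal F\big]\le \mathbb{E}\big[(\sup_i Z_i)^\kappa\mid\mathcal F\big]\le K_{G_k,\xi}\le M$, using that the bound $K$ in Lemma~\ref{ZnLemma}(a) does not depend on $w$ (so it holds conditionally on the realized prefix). (3) Therefore $\mathbb{E}[R^\kappa\mid\mathcal F]=(R')^\kappa\,\mathbb{E}[Z^\kappa\mid\mathcal F]\le M\,(R')^\kappa$, and taking expectations, $\mathbb{E}R^\kappa\le M\,\mathbb{E}(R')^\kappa$. (4) Since $R'$ is precisely the random variable $R$ attached to the shorter data $0\le n_1<\cdots<n_k$ and $g_1,\dots,g_{k-1}$ (and $X_{\le n_k+1}$ a uniform element of the corresponding $\V{}{}$ — one must check the truncated uniform distribution is again uniform on the smaller set, which is immediate since $\V{n_1,\dots,n_{k+1}}{g_1,\dots,g_k}$ is a product set), the inductive hypothesis gives $\mathbb{E}(R')^\kappa\le M^{k-1}$. (5) Combine: $\mathbb{E}R^\kappa\le M^k$. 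The base case $k=0$ is $R=\prod_{i=1}^{n_1}|G_0|\,\tau(X_{\le i})$ with $G_0=\{0\}$, forcing $X=0$ and $\tau(X_{\le i})=1$, so $R=1\le M^0$; alternatively one may take $k=1$ as the base case, handled directly by Lemma~\ref{ZnLemma}(a) with $H=G_1$.

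The main obstacle I anticipate is purely notational rather than conceptual: one has to be careful that, after conditioning on $X_{\le n_k+1}$, the leftover factor $Z$ genuinely matches the definition of $\sup_i Z_i$ in Lemma~\ref{ZnLemma}, i.e. that the indices line up ($j$ ranging over $h+1,\dots,h+i$ with $h=n_k+1$), that $\tau$ is being evaluated on the right prefixes, and — crucially — that the constant $K_{G_k,\xi}$ from Lemma~\ref{ZnLemma}(a) is uniform in $w$, so that bounding $\mathbb{E}[(\sup_i Z_i)^\kappa\mid\mathcal F]$ is legitimate for \emph{every} realization of the $\mathcal F$-measurable tuple $w=X_{\le n_k+1}$ rather than merely for a fixed deterministic $w$. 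Once these identifications are made, the induction is a one-line application of the tower property together with the fact that the conditional law of $Z^\kappa$ is stochastically dominated by $(\sup_i Z_i)^\kappa$ whose $\kappa$-th moment is at most $M$.
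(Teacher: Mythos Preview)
Your proposal is correct and follows essentially the same induction-on-$k$ argument as the paper: condition on the prefix up through position $n_k+1$, identify the last block with $Z_{n_{k+1}-n_k-1}$ from Lemma~\ref{ZnLemma} (with $H=G_k$, $h=n_k+1$, $w=X_{\le n_k+1}$), bound its $\kappa$th moment by $K_{G_k,\xi}\le M$ uniformly in $w$, and then apply the induction hypothesis to the remaining factor. One minor bookkeeping point: your $R'$ is not \emph{exactly} the $R$ attached to the shorter data $(n_1,\dots,n_k),(g_1,\dots,g_{k-1})$ --- it carries an extra factor $\tau(X_{\le n_k+1})\le 1$ --- but since this only makes $R'$ smaller, the inductive bound $\mathbb{E}(R')^\kappa\le M^{k-1}$ still holds (the paper drops this factor explicitly at the same step).
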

\begin{proof}
    We prove by induction on $k$. For $k=0$, the statement is trivial. Assume that $k>0$. Let
    \[R_0=\prod_{j=n_{k}+2}^{n_{k+1}} |G_{k}|\tau(X_{\le j}).\]

    Let $v\in \V{n_1,\dots,n_{k}}{g_1,\dots,g_{k-1}}$. We set $h=n_{k}+1$, $H=G_{k}$. Let $w\in H^h$ be defined by appending $v$ with $g_{k}$. Let $Z_n$ be defined as in Lemma~\ref{ZnLemma}.

    Note that $R_0$ conditioned on the event that $X_{\le n_{k}}=v$ has the same distribution as $Z_{n_{k+1}-n_{k}-1}$.

    Then
    \begin{align*}\mathbb{E}[R^\kappa\,|\,X_{\le n_{k}}=v]&=\left(\left(\prod_{i=0}^{k-1}|G_i|^{n_{i+1}-n_i-1}\right)\prod_{i=1}^{n_{k}+1}\tau(v_{\le i})\right)^{\kappa} \mathbb{E}[R_0^{\kappa}\,|\,X_{\le n_{k}}=v]\\&\le \left(\left(\prod_{i=0}^{k-1}|G_i|^{n_{i+1}-n_i-1}\right)\prod_{i=1}^{n_{k}}\tau(v_{\le i})\right)^{\kappa} \mathbb{E}Z_{n_{k+1}-n_{k}-1}^\kappa\\&
    \le M\left(\left(\prod_{i=0}^{k-1}|G_i|^{n_{i+1}-n_i-1}\right)\prod_{i=1}^{n_{k}}\tau(v_{\le i})\right)^{\kappa},\end{align*}
    where the last step we used Lemma~\ref{ZnLemma}.

    Combining this with the induction hypothesis, we obtain that
    \[\mathbb{E}R^{\kappa}\le M \mathbb{E}\left(\left(\prod_{i=0}^{k-1}|G_i|^{n_{i+1}-n_i-1}\right)\prod_{i=1}^{n_{k}}\tau(X_{\le i})\right)^{\kappa}\le M^k.\qedhere\]
\end{proof}

Let 
\[A_n=\left\{v\in \mathbb{F}_p^n\,:\,v_1\neq 0, |\{1\le i
\le n\,: v_i\neq 0\}|\ge \frac{n}3\right\}.\]

The following lemma is clearly stronger than Lemma~\ref{lemmachi0exists}.
\begin{lemma}\label{lemmachi0existsS}\hfill
\begin{enumerate}[(a)]
    \item The limit
    \[\chi_0=\lim_{n\to\infty} \mathbb{E}|\{v\in\mathbb{F}_p^n\,:\,v_1\neq 0, L_nv=0\}|\]
    exists. 
    \item We have $0<\chi_0<\infty$.
    \item Finally,
    \[\lim_{n\to\infty} \mathbb{E}|\{v\in A_n\,:\, L_nv=0\}|=\chi_0.\]
    \end{enumerate}
\end{lemma}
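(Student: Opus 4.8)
The plan is to deduce all three statements from the properties of the random variable $Z_\infty$ established in Lemma~\ref{ZnLemma}, specialized to $H=\mathbb{F}_p$, $h=1$ and $w=(1)$ (whose single component generates $\mathbb{F}_p$). For this choice, $X=(X_1,X_2,\dots)$ has $X_1=1$ and $X_2,X_3,\dots$ i.i.d.\ uniform in $\mathbb{F}_p$, and $Z_i=\prod_{j=2}^{1+i}p\,\tau(X_{\le j})$. Write $\alpha_0=\mathbb{P}(\xi\equiv 0\bmod p)$, so $0<\alpha_0<1$ by \eqref{xicond}, and let $N_n$ be the number of nonzero coordinates of $X_{\le n}$; since $X_1=1$, the defining condition of $A_n$ translates, for $v=X_{\le n}$, into $N_n\ge n/3$.

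The first step is to prove the two identities
\begin{align*}
\mathbb{E}\bigl|\{v\in\mathbb{F}_p^n:v_1\neq 0,\ L_nv=0\}\bigr|&=(p-1)\,\alpha_0\,\mathbb{E}Z_{n-1},\\
\mathbb{E}\bigl|\{v\in A_n:L_nv=0\}\bigr|&=(p-1)\,\alpha_0\,\mathbb{E}\bigl[\mathbbm{1}(N_n\ge n/3)\,Z_{n-1}\bigr].
\end{align*}
For this I would recall $\mathbb{P}(L_nv=0)=\prod_{i=1}^n\tau(v_{\le i})$ from \eqref{randomvscounting0}, and note that both $\mathbb{P}(L_nv=0)$ and the number of nonzero coordinates of $v$ are unchanged under multiplication of $v$ by a unit of $\mathbb{F}_p$; hence $\sum_{v:\,v_1=c}\mathbb{P}(L_nv=0)$ --- also with the extra restriction $|\{i:v_i\neq 0\}|\ge n/3$ imposed --- does not depend on $c\in\mathbb{F}_p^{*}$, so the total over $\{v_1\neq 0\}$ equals $(p-1)$ times the total over $\{v_1=1\}$. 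On $\{v_1=1\}$ one writes $v=X_{\le n}$ with $X$ uniform on that set, uses $\tau(X_{\le 1})=\tau((1))=\alpha_0$ and $p^{\,n-1}\prod_{i=2}^{n}\tau(X_{\le i})=Z_{n-1}$, and averages; this is a routine rearrangement.

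Given the identities, parts (a) and (b) are short. For (a): $\mathbb{E}Z_{n-1}\to\mathbb{E}Z_\infty$ by Lemma~\ref{ZnLemma}(c), so the limit $\chi_0=(p-1)\alpha_0\,\mathbb{E}Z_\infty$ exists. For (b): $\mathbb{E}Z_\infty\le\mathbb{E}\bigl(\sup_{i\ge 1}Z_i\bigr)\le K<\infty$ by Lemma~\ref{ZnLemma}(a) with $\kappa=1$, so $\chi_0<\infty$; and $\chi_0>0$ because $Z_\infty>0$ almost surely. For the latter: since $X_1=1$ and, by the law of large numbers, $N_j$ grows linearly in $j$ almost surely, Lemma~\ref{Fourierestimate} shows that $\sum_{j\ge 2}|p\,\tau(X_{\le j})-1|<\infty$ and $p\,\tau(X_{\le j})>0$ for all large $j$ almost surely, while $p\,\tau(X_{\le j})\ge p\,\alpha_0^{\,j}>0$ for every $j$; hence the infinite product $Z_\infty$ converges to a strictly positive limit a.s., and so $\mathbb{E}Z_\infty>0$. (Positivity of $Z_\infty$ is in any case already contained in the proof of Lemma~\ref{ZnLemma}(b).)

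The content is in part (c): subtracting the two identities, it suffices to show
\[\mathbb{E}\bigl[\mathbbm{1}(N_n<n/3)\,Z_{n-1}\bigr]\longrightarrow 0.\]
Here $N_n-1\sim\mathrm{Binomial}\bigl(n-1,\,1-\tfrac1p\bigr)$ and $1-\tfrac1p\ge\tfrac12>\tfrac13$, so a Chernoff bound gives $\mathbb{P}(N_n<n/3)\le e^{-\delta n}$ for some $\delta=\delta(p)>0$ and all large $n$; on the other hand Lemma~\ref{ZnLemma}(a) provides a fixed $\kappa=\kappa_0>1$ with $\mathbb{E}Z_{n-1}^{\kappa}\le K$ uniformly in $n$. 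Hölder's inequality then yields $\mathbb{E}\bigl[\mathbbm{1}(N_n<n/3)Z_{n-1}\bigr]\le K^{1/\kappa}e^{-\delta n(1-1/\kappa)}\to 0$, which finishes the proof. The main obstacle will be exactly this last estimate: one cannot afford to bound $\prod_{i=1}^n\tau(v_{\le i})$ pointwise and sum over the $v$ with fewer than $n/3$ nonzero coordinates, since the best elementary pointwise bound is of order $a_1^{\,n}$ with $a_1=\tfrac1p+\tfrac{p-1}p e^{-c}$ --- which may be arbitrarily close to $1$ --- and this is swamped by the $\sim\binom{n}{n/3}$ such vectors; keeping the probabilistic formulation and playing the uniform $L^\kappa$-bound of Lemma~\ref{ZnLemma}(a) against the large-deviation estimate for $N_n$ is what makes it go through.
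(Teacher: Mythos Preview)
Your proof is correct and follows essentially the same approach as the paper: reduce to $\mathbb{E}Z_{n-1}$ via \eqref{randomvscounting0} and Lemma~\ref{ZnLemma}, then for (c) play the uniform $L^\kappa$ bound against a tail estimate for $N_n$ through H\"older's inequality. Your version is slightly streamlined---using the $\mathbb{F}_p^{*}$-symmetry to reduce to $v_1=1$, arguing $Z_\infty>0$ a.s.\ directly for (b) rather than digging into the proof of Lemma~\ref{ZnLemma} via $T_{\max}$, and invoking Lemma~\ref{ZnLemma}(a) directly in (c) rather than its wrapper Lemma~\ref{RLemma}---but the substance is the same.
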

\begin{proof}
Part (a): Let $0\neq g\in \mathbb{F}_p$. Combining equation~\eqref{randomvscounting0} and Lemma~\ref{ZnLemma} with the choice $H=\mathbb{F}_p$, $h=1$, $w_1=g$, we obtain that,
\[\lim_{n\to\infty} \mathbb{E}|\{v\in\mathbb{F}_p^n\,:\,v_1= g, L_nv=0\}|=\lim_{n\to\infty} \tau(w)\mathbb{E}Z_{n-1}\]
    exists and finite. Summing this over all the possible choices of $0\neq g\in \mathbb{F}_p$, we get the statement.

Part (b): We already seen that $\chi_0<\infty$. To show that $\chi_0>0$, let $0\neq g\in \mathbb{F}_p$. We will rely on the proof of Lemma~\ref{ZnLemma} with the choice $H=\mathbb{F}_p$, $h=1$, $w_1=g$. Let $X$ be defined as in that lemma. Let $q$ be the probability that $\xi$ is divisible by $p$. By \eqref{xicond}, we have $q>0$. Note that
\[\tau(X_{\le i})\ge q^i.\]

Choose a positive integer $t$ such that $1-p\exp(-c\varepsilon t)>0$ and $\mathbb{P}(T_{\max}\le t)>0$. Then following the argument of the proof of Lemma~\ref{ZnLemma}, we obtain that

\[\mathbb{E} \mathbbm{1}(T_{\max}\le t) Z_\infty\quad{\ge}\quad\mathbb{P}(T_{\max}\le t) \left(\prod_{i=1}^{t}pq^{i+1}\right) \prod_{i=t+1}^\infty \left(1-p\exp(-c\varepsilon i)\right) >0.\]

Since by Lemma~\ref{ZnLemma} and \eqref{randomvscounting0}, we have
\[\lim_{n\to\infty}\mathbb{E}|\{v\in \mathbb{F}_p^n\,:\, v_1=g,L_nv=0\}|=q\mathbb{E}Z_\infty,\]
 part (b) follows.

Part (c): let $0\neq g\in \mathbb{F}_p$. We apply Lemma~\ref{RLemma} with the choice $G=\mathbb{F}_p$, $k=1$, $g_1=g$, $n_1=0$ and $n_2=n$. Let $X$ be defined as in that lemma. Then combining Lemma~\ref{RLemma} with~\eqref{randomvscounting0}, we have
\[\mathbb{E} \left(p^{n-1} \mathbb{P}(L_nX=0)\right)^{\kappa_1}\le M.\]

It follows from the law of large numbers that
\[\lim_{n\to \infty}\mathbb{P}(X\notin A_n)=0.\]

Choose $\kappa_2$ such that $\frac{1}{\kappa_1}+\frac{1}{\kappa_2}=1$. By H\"{o}lder's inequality,
\begin{align*}\lim_{n\to\infty}&\mathbb{E} |\{v\in\mathbb{F}_p^n\setminus A_n, v_1=g,L_nv=0\}|\\&=\lim_{n\to\infty}\mathbb{E} \mathbbm{1}(X\notin A_n)p^{n-1} \mathbb{P}(L_nX=0)&\\&\le \lim_{n\to\infty}\mathbb{P}(X\notin A_n)^{1/\kappa_2}\left(\mathbb{E} \left(p^{n-1} \mathbb{P}(L_nX=0)\right)^{\kappa_1}\right)^{1/\kappa_1}\\&\le \lim_{n\to\infty}\mathbb{P}(X\notin A_n)^{1/\kappa_2}  M^{1/\kappa_1}\\&=0.\end{align*}

Summing these over $0\neq g\in \mathbb{F}_p$ and using part (a), the statement follows.
\end{proof}

Let us consider $n_1,n_2,\dots,n_{k+1},g_1,g_2,\dots, g_{k}$ and $G_0,G_1,\dots,G_{k}$ as before. We say that $(n_1,n_2,\dots,n_{k+1})$ and $(g_1,g_2,\dots, g_{k})$ form an $(n,k)$-skeleton, if $n_{k+1}=n$ and $G_0, G_1,\dots, G_{k}$ are pairwise distinct. Moreover, by an $n$-skeleton we mean an $(n,k)$-skeleton for some $k$. The proof of the following lemma is straightforward.

\begin{lemma}
Let $v\in G^n$, then there is a unique $n$-skeleton $(n_1,\dots,n_{k+1}),(g_1,\dots, g_{k})$ such that $v\in \V{n_1,\dots,n_{k+1}}{g_1,\dots, g_{k}}$.
\end{lemma}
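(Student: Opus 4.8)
The claim is that every $v\in G^n$ lies in $\V{n_1,\dots,n_{k+1}}{g_1,\dots,g_{k}}$ for a \emph{unique} $n$-skeleton. The plan is to read off the skeleton directly from $v$ by scanning its coordinates from left to right and recording exactly the positions where a new subgroup of $G$ is entered. Concretely, set $m_0 = 1$ and let $g_1 = v_{m_0} = v_1$, generating $G_1 = \langle g_1\rangle$. Having defined $G_i$, let $m_i$ be the smallest index $j > m_{i-1}$ (if one exists) with $v_j \notin G_i$, set $g_i' := v_{m_i}$, and let $G_{i+1} = \langle g_1,\dots,g_i,g_i'\rangle$; since $v_{m_i}\notin G_i$ we have $G_{i+1}\supsetneq G_i$, so the $G_i$ are strictly increasing and the process terminates after at most $\log_p|G|$ steps, say with $k$ jumps. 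Finally translate the jump positions $1=m_0 < m_1 < \dots < m_k$ into the skeleton coordinates via $n_i = m_i - 1$ for $i=1,\dots,k$ and $n_{k+1} = n$, $n_0 = -1$.

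Existence then amounts to checking that $v\in \V{n_1,\dots,n_{k+1}}{g_1,\dots,g_k}$ for this choice: the condition $v_{n_i+1} = v_{m_i} = g_i$ holds by construction, and for $n_i + 1 < j \le n_{i+1}$ we have $m_i < j < m_{i+1}$, so by minimality of $m_{i+1}$ the coordinate $v_j$ does \emph{not} fail to lie in $G_i$, i.e.\ $v_j \in G_i$; for the last block ($i = k$) there is no $m_{k+1}$, so $v_j \in G_k$ for all $m_k < j \le n$. The $G_i$ are pairwise distinct by strict monotonicity, and $n_{k+1} = n$, so $(n_1,\dots,n_{k+1}),(g_1,\dots,g_k)$ is a genuine $n$-skeleton.

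For uniqueness, suppose $v$ also lies in $\V{n_1',\dots,n_{k'+1}'}{g_1',\dots,g_{k'}'}$ for some $n$-skeleton with associated groups $G_0',\dots,G_{k'}'$. I would argue by induction that $m_i = n_i' + 1$ and $g_i = g_i'$ (hence $G_i = G_i'$) for all $i$. The membership conditions force $v_1 = v_{n_0'+1}$... more carefully: for the primed skeleton, $v_j \in G_0' = \{0\}$ for $-1 < j \le n_1'$ would force those coordinates to vanish, but $v_{n_1'+1} = g_1' \ne 0$ (distinctness of $G_0', G_1'$ forces $g_1'\notin G_0'$, hence $g_1'\neq 0$); so $n_1'$ is the largest prefix length on which $v$ is identically $0$ — but that is exactly how $m_1$ (equivalently $n_1$) was defined once one notes $G_1 = \{0\}$ is impossible, so in fact one should handle the $G_0=\{0\}$ block first: the first nonzero coordinate of $v$ is at position $n_1+1 = n_1'+1$, giving $g_1 = g_1'$ and $G_1 = G_1'$. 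Inductively, given $G_i = G_i'$ and the agreement of the first $n_i+1$ coordinates, the next block of the primed skeleton requires $v_j \in G_i'= G_i$ for $n_i'+1 < j \le n_{i+1}'$ and $v_{n_{i+1}'+1} = g_{i+1}' \notin G_i$ (by distinctness $G_{i+1}'\neq G_i'$, and $G_{i+1}'\supseteq G_i'$ forces strict containment, so $g_{i+1}'\notin G_i'$); hence $n_{i+1}'+1$ is the smallest index beyond $n_i+1$ at which $v$ leaves $G_i$, which is precisely the definition of $m_{i+1} = n_{i+1}+1$, and then $g_{i+1} = v_{m_{i+1}} = g_{i+1}'$. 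Since the process in both cases stops exactly when no further coordinate leaves the current group, we get $k = k'$ and the two skeletons coincide.

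\textbf{Main obstacle.} There is no real analytic difficulty here; the only thing to be careful about is the bookkeeping of off-by-one indices (the convention $n_0 = -1$, the correspondence $n_i \leftrightarrow m_i - 1$) and the base case where $G_0 = \{0\}$, which behaves slightly differently from a general $G_i$ in that ``$v_j \in G_0$'' means ``$v_j = 0$'' rather than a membership in a nontrivial group. Once these conventions are pinned down, existence is immediate from the construction and uniqueness is a clean induction on the block index, which is why the lemma is described as straightforward.
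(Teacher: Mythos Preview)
Your approach is exactly the natural one, and indeed the paper gives no proof beyond declaring the lemma ``straightforward.'' The idea---scan $v$ left to right, record the positions where the subgroup generated by the prefix strictly increases, and take those as the $n_i+1$---is correct and yields both existence and uniqueness.

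That said, the execution has a couple of genuine indexing slips. First, your construction opens with $m_0=1$ and $g_1=v_1$, which fails when $v_1=0$: then $G_1=\langle 0\rangle=G_0$, violating the distinctness condition, so what you produce is not a skeleton. (The all-zero vector $v=0$, which should correspond to the $(n,0)$-skeleton with $n_1=n$ and no $g$'s, is also not covered.) You catch this in the uniqueness argument, correctly identifying $n_1+1$ as the position of the first nonzero coordinate, but you never go back and repair the existence construction accordingly. Second, there is an off-by-one mismatch: with $k$ distinct nontrivial groups $G_1,\dots,G_k$ there are $k$ jump positions, yet you list $m_0,\dots,m_k$ and then set $n_i=m_i-1$ while having defined $g_1=v_{m_0}$, so $v_{n_1+1}=v_{m_1}\neq g_1$ in general. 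The clean fix is to start from $G_0=\{0\}$, let $m_1$ be the first index with $v_{m_1}\neq 0$ (if any), and inductively let $m_{i+1}$ be the first index beyond $m_i$ with $v_{m_{i+1}}\notin G_i$; then $n_i=m_i-1$, $g_i=v_{m_i}$, and $n_{k+1}=n$. With that adjustment both parts of your argument go through verbatim.
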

 We call the $n$-skeleton provided by the previous lemma the skeleton of $v$, and we use $k(v), n_1(v),n
 _2(v),\dots$ to denote the $k,n_1,n_2,\dots$ above, respectively. 

 Let $\ell=\log_p|G|$. Note that $k(v)\le \ell$.

\begin{lemma}\label{lemmaneg1}
We have
\[\lim_{n\to\infty}n^{-\ell}\mathbb{E}|\{v\in G^n\,:\,k(v)<\ell,\, L_nv=0\}|=0.\]
\end{lemma}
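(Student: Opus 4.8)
The plan is to decompose the sum over $v\in G^n$ with $k(v)<\ell$ according to the skeleton of $v$. Fix an $(n,k)$-skeleton with $k<\ell$, i.e.\ a choice of cut points $0\le n_1<\cdots<n_{k+1}=n$ and elements $g_1,\dots,g_k$ with $G_0,\dots,G_k$ pairwise distinct; write $\ell_i=\log_p|G_i|$. Using \eqref{randomvscounting} and Lemma~\ref{RLemma} (with $\kappa=1$), the expected number of $v\in\V{n_1,\dots,n_{k+1}}{g_1,\dots,g_k}$ with $L_nv=0$ equals $\mathbb{E}R\le M^k\le M^\ell$, a bound that does \emph{not} depend on $n$ or on the chosen skeleton. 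Since $G_0,\dots,G_k$ are pairwise distinct and $k<\ell$, at least one of the chains of gaps is ``short enough'' — but more to the point, the key observation is that $\mathbb{E}R$ is uniformly bounded while the number of $n$-skeletons with $k(v)<\ell$ grows only polynomially in $n$. Concretely, a skeleton with $k<\ell$ steps is determined by the $k$ group elements $g_1,\dots,g_k$ (boundedly many choices, independent of $n$) together with the $k$ cut points $0\le n_1<\cdots<n_k<n$, of which there are at most $\binom{n}{k}\le \binom{n}{\ell-1}=O(n^{\ell-1})$.

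Putting these together,
\[
\mathbb{E}|\{v\in G^n:\,k(v)<\ell,\ L_nv=0\}|
=\sum_{\substack{\text{$n$-skeletons}\\ k<\ell}}\mathbb{E}|\{v\in\V{n_1,\dots,n_{k+1}}{g_1,\dots,g_k}:\,L_nv=0\}|
\le C\, n^{\ell-1} M^{\ell},
\]
for a constant $C$ depending only on $G$ (absorbing the bounded number of choices of the $g_i$'s and the sum over $k=0,1,\dots,\ell-1$). Dividing by $n^\ell$ and letting $n\to\infty$ gives the claim.

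The only point requiring a little care is the combinatorial count: one must check that the number of $n$-skeletons with $k(v)<\ell$ is genuinely $O(n^{\ell-1})$ rather than $O(n^{\ell})$. This is exactly where the hypothesis $k<\ell$ is used — an unrestricted $n$-skeleton can have $k$ as large as $\ell$ (when $G_0\subsetneq G_1\subsetneq\cdots\subsetneq G_\ell=G$ is a full composition series), contributing $\binom{n}{\ell}=\Theta(n^\ell)$ skeletons, which would only give a bound of order $n^\ell$ and hence no decay after normalization; restricting to $k\le\ell-1$ saves exactly one factor of $n$. I do not expect any real obstacle here: the uniform moment bound $\mathbb{E}R\le M^k$ from Lemma~\ref{RLemma} is the substantive input, and the rest is bookkeeping. (This lemma is the ``easy half'' of Theorem~\ref{thmmoment}: it says the contribution of vectors whose support is not spread across a full-length chain of subgroups is negligible, so that the main term comes entirely from $v$ with $k(v)=\ell$, which will be analyzed using Lemma~\ref{lemmachi0existsS} and Lemma~\ref{ZnLemma}.)
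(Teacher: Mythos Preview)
Your proposal is correct and follows essentially the same route as the paper: decompose by skeleton, bound each $\mathbb{E}R$ by $M^k$ via Lemma~\ref{RLemma} and \eqref{randomvscounting}, and count that there are $O(n^{\ell-1})$ skeletons with $k<\ell$. The paper phrases the count slightly differently (bounding the number of $(n,k)$-skeletons by $(n|G|)^k$ and hence the $k$th contribution by $(n|G|M)^k$), but the idea is identical.
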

\begin{proof}
The number of $(n,k)$-skeletons is at most $(n|G|)^k$. Combining this with Lemma~\ref{RLemma} and~\eqref{randomvscounting}, we see that for any $k$, we have
\[\mathbb{E}|\{v\in G^n\,:\,k(v)=k,\, L_nv=0\}|\le (n|G|M)^k.\]
Thus, the statement follows easily.
\end{proof}

From now on we mostly restrict our attention to $(n,\ell)$-skeletons. Note that for an $(n,\ell)$-skeleton $(n_1,\dots,n_{\ell+1}),(g_1,\dots, g_{\ell})$, the chain $G_0\subsetneq G_1\subsetneq\cdots\subsetneq G_\ell$ is a maximal chain of subgroups in $G$. Thus, $G_\ell=G$ and $G_{i}/G_{i-1}\cong \mathbb{F}_p$ for all $i=1,2,\dots,\ell$.

Let $(n_1,\dots,n_{\ell+1}),(g_1,\dots, g_{\ell})$ be an $(n,\ell)$-skeleton. We say that this skeleton is well separated if \begin{equation}\label{wellsepcond}n_{i+1}-n_{i}>3\log^2 n\quad\text{ for all }i=1,\dots,\ell.\end{equation} 

\begin{lemma}\label{lemmaneq2}
We have
\[\lim_{n\to\infty}n^{-\ell}\mathbb{E}|\{v\in G^n\,:\,k(v)=\ell,\, L_nv=0,\text{ the skeleton of  }v\text{ is not well separated}\}|=0.\]
\end{lemma}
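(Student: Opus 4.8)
The plan is to bound the contribution of skeletons that fail the well-separatedness condition \eqref{wellsepcond} by showing that each such skeleton has few vectors, and that there are not too many such skeletons. Fix an $(n,\ell)$-skeleton $(n_1,\dots,n_{\ell+1}),(g_1,\dots,g_\ell)$ and recall that the number of vectors in $\V{n_1,\dots,n_{\ell+1}}{g_1,\dots,g_\ell}$ is exactly $\prod_{i=0}^{\ell}|G_i|^{n_{i+1}-n_i-1}$. Since $G_0\subsetneq G_1\subsetneq\cdots\subsetneq G_\ell=G$ is a maximal chain, we have $|G_i|=p^i$, so this count equals $p^{\sum_{i=0}^\ell i(n_{i+1}-n_i-1)}$. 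Comparing with the trivial bound $|G^n|=p^{\ell n}$, the telescoping sum shows that $\sum_{i=0}^\ell i(n_{i+1}-n_i-1) = \ell n - \sum_{i=1}^{\ell}(n_i+1)$, so roughly speaking a skeleton with small gap $n_{i+1}-n_i$ "loses" very little compared to the maximum only when the small gaps occur for small $i$; more precisely, if $n_{i_0+1}-n_{i_0}$ is small for some particular $i_0$, the vector count is at most $p^{-(\ell-i_0)(n_{i_0+1}-n_{i_0}-1)}\cdot(\text{max count for that choice of }n_1,\dots,n_{i_0},n_{i_0+2},\dots)$. This is the crude ingredient; combined with Lemma~\ref{RLemma}, which bounds $\mathbb{E}R^\kappa$ uniformly, we obtain via \eqref{randomvscounting} and Jensen/Hölder that the expected number of zero vectors in a single skeleton is at most $M^\ell$ times the vector count divided by the total vector count — i.e.\ at most $M^\ell \prod_{i=0}^\ell |G_i|^{n_{i+1}-n_i-1}/\prod_{i=0}^\ell |G|^{\cdots}$; the key point is that $\mathbb{E}|\{v\in \V{}{}: L_n v = 0\}| \le M^\ell$ by Lemma~\ref{RLemma} with $\kappa = 1$ directly, regardless of the gaps.

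With that uniform bound $\mathbb{E}|\{v\in \V{n_1,\dots,n_{\ell+1}}{g_1,\dots,g_\ell}: L_nv=0\}|\le M^\ell$ in hand, the statement reduces to a counting estimate: the number of $(n,\ell)$-skeletons that are \emph{not} well separated is $o(n^\ell)$. The number of choices of the group-generators $g_1,\dots,g_\ell$ is a constant depending only on $G$ (at most $|G|^\ell$), so it suffices to count the tuples $0\le n_1<n_2<\cdots<n_{\ell+1}=n$ for which $n_{i+1}-n_i\le 3\log^2 n$ for at least one $i\in\{1,\dots,\ell\}$. The number of \emph{all} such tuples is $\binom{n}{\ell}=\Theta(n^\ell)$, but if we fix which index $i_0$ violates the condition, then $n_{i_0+1}$ is determined by $n_{i_0}$ up to $3\log^2 n$ choices, so the number of bad tuples with a violation at $i_0$ is at most $3\log^2 n\cdot\binom{n}{\ell-1}=O(n^{\ell-1}\log^2 n)$. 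Summing over the $\ell$ possible values of $i_0$ gives $O(n^{\ell-1}\log^2 n)=o(n^\ell)$ bad skeletons. Multiplying by $M^\ell$ and dividing by $n^\ell$ yields the claim.

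The main obstacle — really the only non-routine point — is making sure that Lemma~\ref{RLemma} genuinely gives a bound that is uniform over \emph{all} $(n,\ell)$-skeletons, including the degenerate ones with tiny gaps, since the induction in that lemma does not see the gap sizes at all: indeed $\mathbb{E}R = \mathbb{E}|\{v\in\V{}{}: L_n v = 0\}| \le M^{\ell}$ holds with the gaps arbitrary, which is exactly what we need. One should double-check the edge case where several consecutive gaps are simultaneously small (so that more than one index is "bad"), but since we only need an \emph{upper} bound on the count of bad skeletons, overcounting by summing over each possible violating index $i_0$ separately is harmless. Everything else — the telescoping identity, the union bound over $i_0$, the constant count of generator tuples — is bookkeeping, so the proof is short.
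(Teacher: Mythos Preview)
Your proposal is correct and takes essentially the same approach as the paper: bound each skeleton's contribution by $M^\ell$ via Lemma~\ref{RLemma} with $\kappa=1$ and \eqref{randomvscounting}, then count that there are only $O(|G|^\ell n^{\ell-1}\log^2 n)$ non-well-separated $(n,\ell)$-skeletons. The first paragraph's detour through vector counts and H\"older is unnecessary---as you yourself note, the uniform bound $\mathbb{E}R\le M^\ell$ from Lemma~\ref{RLemma} is all that is needed---but the substance of your argument matches the paper's.
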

\begin{proof}
It is easy to prove that the number of $(n,\ell)$-skeletons which are not well separated is at most $3\ell |G|^\ell n^{\ell-1}\log^2 n$. Thus, the lemma follows the same way as Lemma~\ref{lemmaneg1}.
\end{proof}

Let $(n_1,\dots,n_{\ell+1}),(g_1,\dots, g_{\ell})$ be a well separated skeleton. We partition $\V{n_1,\dots,n_{\ell+1}}{g_1,\dots, g_{\ell}}$ into two subsets $\U{n_1,\dots,n_{\ell+1}}{g_1,\dots, g_{\ell}}$ and $\W{n_1,\dots,n_{\ell+1}}{g_1,\dots, g_{\ell}}$ such that  $\U{n_1,\dots,n_{\ell+1}}{g_1,\dots, g_{\ell}}$ consists of those elements $v$ of $\V{n_1,\dots,n_{\ell+1}}{g_1,\dots, g_{\ell}}$ which satisfy that
\begin{equation*}|\{n_{i}+1\le j\le n_{i+1}\,:\,v_j\notin G_{i-1}\}|\ge\frac{n_{i+1}-n_i}3\quad \text{ for all }i=1,2,\dots,\ell.\end{equation*}

In the lemma below $\sum_{ws(n)}$ denotes a sum over all well separated $(n,\ell)$-skeletons.

\begin{lemma}\label{lemmaneq3}
We have
\[\lim_{n\to\infty}n^{-\ell}\sum_{ws(n)}\mathbb{E}\left|\left\{v\in \W{n_1,\dots,n_{\ell+1}}{g_1,\dots, g_{\ell}}\,:\, L_nv=0\right\}\right|=0.\]
\end{lemma}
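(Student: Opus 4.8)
The plan is to bound the contribution of $\W{n_1,\dots,n_{\ell+1}}{g_1,\dots, g_{\ell}}$ by splitting the product defining the probability $\mathbb{P}(L_n v=0)$ into $\ell$ blocks, one for each interval $(n_i, n_{i+1}]$, and showing that on $\W{\cdots}{\cdots}$ at least one block produces an extra decaying factor which, after summing over the $O(n^\ell)$ skeletons, beats the polynomial growth. More precisely, I would first use \eqref{randomvscounting} to rewrite each summand as $\mathbb{E} R \cdot \mathbbm{1}(X \in \W{\cdots}{\cdots})$ where $X$ is uniform on $\V{\cdots}{\cdots}$ and $R = \bigl(\prod_{i=0}^\ell |G_i|^{n_{i+1}-n_i-1}\bigr)\prod_{i=1}^n \tau(X_{\le i})$.

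The key step is an estimate for a single block. Fix a well separated skeleton and an index $i$, and consider the event $\mathcal{B}_i$ that $|\{n_i+1\le j\le n_{i+1}: v_j\notin G_{i-1}\}| < (n_{i+1}-n_i)/3$; by definition of $\W{\cdots}{\cdots}$, this $v$ lies in $\W{\cdots}{\cdots}$ iff $\mathcal B_i$ holds for at least one $i$, so a union bound reduces matters to controlling each $\mathbb{E} R\,\mathbbm{1}(\mathcal B_i)$. On $\mathcal B_i$, within the $i$-th block the vector $X$ is mostly valued in the proper subgroup $G_{i-1}$ of $G_i$ — at most a $1/3$ fraction of coordinates escape $G_{i-1}$. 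I would then run through the proof of Lemma~\ref{ZnLemma} (with $H = G_i$, $h = n_{i-1}+1$, $w$ the appropriate prefix ending in $g_{i-1}$) conditioned on this atypical behavior: because $\mathbb{P}(X_{h+j}\notin\ker\varrho)\ge (p-1)/p$ for any nontrivial $\varrho$, the event that fewer than a $1/3$ fraction of the coordinates in a block of length $L := n_{i+1}-n_i$ lie outside some fixed index-$p$ subgroup has probability at most $e^{-c_3 L}$ for a constant $c_3 > 0$ by a Chernoff/large deviations bound; combined with the moment bound $\mathbb{E}\bigl(\sup_i Z_i\bigr)^{\kappa_1} < K$ from Lemma~\ref{ZnLemma}(a) and Hölder's inequality exactly as in the proof of Lemma~\ref{lemmachi0existsS}(c), this yields $\mathbb{E} R\,\mathbbm{1}(\mathcal B_i) \le M^{\ell}\, e^{-c_4 L}$ for some $c_4>0$. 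Since the skeleton is well separated, $L = n_{i+1}-n_i > 3\log^2 n$, so this is $O(M^\ell n^{-c_5 \log n})$, which decays faster than any power of $n$.

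Finally I would assemble: there are at most $(n|G|)^\ell = O(n^\ell)$ skeletons in total (for each there are $\le\ell$ choices of $i$), so
\[
n^{-\ell}\sum_{ws(n)}\mathbb{E}\bigl|\{v\in \W{n_1,\dots,n_{\ell+1}}{g_1,\dots, g_{\ell}}: L_nv=0\}\bigr| \le n^{-\ell}\cdot O(n^\ell)\cdot \ell\cdot M^\ell\, n^{-c_5\log n} \to 0.
\]
The main obstacle is the single-block estimate: I need the large-deviations bound for "too few coordinates escaping a proper subgroup" to hold uniformly over all index-$p$ subgroups $H'$ of all subgroups $H\le G$ and all block lengths, and I need to interface it cleanly with the conditional-on-prefix structure of $R$ so that Hölder's inequality applies — the subtlety is that $R$ involves the $\tau$-factors from all blocks, not just the $i$-th, so one must first peel off the earlier blocks using the tower property (as in the induction in Lemma~\ref{RLemma}) before invoking the decaying factor from block $i$.
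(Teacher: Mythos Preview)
Your approach is essentially the same as the paper's: rewrite the summand as $\mathbb{E}\,R\,\mathbbm{1}(X\in W^{\cdots}_{\cdots})$, bound $\mathbb{E}R^{\kappa_1}$ via Lemma~\ref{RLemma}, bound the indicator by showing $\mathbb{P}(X\in W^{\cdots}_{\cdots})$ is small, and combine with H\"older. The paper does exactly this, except it bounds $\mathbb{P}(X\in W^{\cdots}_{\cdots})<\varepsilon$ qualitatively via the law of large numbers (uniformly over well-separated skeletons, since each block has length $>3\log^2 n\to\infty$) rather than your quantitative Chernoff bound. Your union bound over $i$ and the explicit exponential rate are fine but not needed; the paper's $\varepsilon$-argument suffices since the number of skeletons is $O(n^\ell)$.

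Your stated ``main obstacle'' is not an obstacle: Lemma~\ref{RLemma} already gives $\mathbb{E}R^{\kappa_1}\le M^\ell$ for the full product $R$ over all blocks, so no additional peeling or tower-property argument is required on top of it. You should cite Lemma~\ref{RLemma} (not Lemma~\ref{ZnLemma}(a)) for the moment bound on $R$; the induction you allude to is precisely what Lemma~\ref{RLemma} packages. With that correction, your argument goes through and is slightly more quantitative than, but structurally identical to, the paper's.
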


\begin{proof}
Let $(n_1,\dots,n_{\ell+1}),(g_1,\dots, g_{\ell})$ be a well separated $(n,\ell)$-skeleton, and let $X$ be a uniform random element of $\V{n_1,\dots,n_{\ell+1}}{g_1,\dots, g_{\ell}}$, and define $R$ as in Lemma~\ref{RLemma}. Let $\varepsilon>0$. It follows from the law of large numbers that provided that $n$ is large enough, we have
\[\mathbb{P}\left(X\in \W{n_1,\dots,n_{\ell+1}}{g_1,\dots, g_{\ell}}\right)<\varepsilon\]
for all choices of well separated $(n,\ell)$-skeletons.

Choose $\kappa_2$ such that $\frac{1}{\kappa_1}+\frac{1}{\kappa_2}=1$. Combining \eqref{randomvscounting0} with H\"{o}lder's inequality and Lemma~\ref{RLemma}, we obtain that
\begin{align*}
\mathbb{E}&\left|\left\{v\in \W{n_1,\dots,n_{\ell+1}}{g_1,\dots, g_{\ell}}\,:\, L_nv=0\right\}\right|\\&=\mathbb{E} \mathbbm{1}\left(X\in \W{n_1,\dots,n_{\ell+1}}{g_1,\dots, g_{\ell}}\right) R\\&
\le \left(\mathbb{E} R^{\kappa_1}\right)^{1/\kappa_1} \left(\mathbb{P}\left(X\in \W{n_1,\dots,n_{\ell+1}}{g_1,\dots, g_{\ell}}\right)\right)^{1/\kappa_2}\\&\le M^{\ell/\kappa_1} \varepsilon^{1/\kappa_2}.
\end{align*}
Since the number of $(n,\ell)$-skeletons is at most $(|G|n)^\ell$, and $\varepsilon$ can be arbitrary the statement follows.\end{proof}

Let $(n_1,\dots,n_{\ell+1}),(g_1,\dots, g_{\ell})$ be a well separated $(n,\ell)$-skeleton, and let 
$v\in \V{n_1,\dots,n_{\ell+1}}{g_1,\dots, g_{\ell}}$. For $i=1,\dots,\ell$, we define
\[v^{(i)}=(v_{n_i+1}+G_{i-1},\,v_{n_i+2}+G_{i-1},\dots,\,v_{n_{i+1}}+G_{i-1})\in \left(G_i/G_{i-1}\right)^{n_{i+1}-n_i}.\]

\begin{lemma}\label{LemmaforU}
Assume that $n$ is large enough. Let $(n_1,\dots,n_{\ell+1}),(g_1,\dots, g_{\ell})$ be a well separated $(n,\ell)$-skeleton and let $v\in \U{n_1,\dots,n_{\ell+1}}{g_1,\dots, g_{\ell}}$. 

Then for $i=2,3,\dots, \ell$ and $1\le j\le n_{i+1}-n_{i}$, we have
\[(1-(p-1)\exp(-c\log^2 n))^{i-1}\frac{\tau(v^{(i)}_{\le j})}{p^{i-1}}\le\tau(v_{\le n_{i}+j})\le (1+(p-1)\exp(-c\log^2 n))^{i-1} \frac{\tau(v^{(i)}_{\le j})}{p^{i-1}} .\]
\end{lemma}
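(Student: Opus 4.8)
The plan is to condition on the ``top layer'' $G_{i-1}\subseteq G_i$ in order to split off the factor $\tau(v^{(i)}_{\le j})$, and then to establish the remaining estimate by an induction that peels off the quotients $G_{m-1}\subseteq G_m$ one at a time. Write $\delta=\exp(-c\log^2 n)$. First I would record the block structure: since the chain $G_0\subsetneq\cdots\subsetneq G_\ell$ is maximal, the definition of $\V{n_1,\dots,n_{\ell+1}}{g_1,\dots,g_\ell}$ forces $v_a\in G_{i-1}$ for $a\le n_i$ and $v_a\in G_i$ for $n_i+1\le a\le n_i+j$ (these indices lie in block $i$ because $n_i+j\le n_{i+1}$). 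Put $V=\sum_{a=1}^{n_i}\xi_a v_a\in G_{i-1}$ and $W=\sum_{a=n_i+1}^{n_i+j}\xi_a v_a\in G_i$; these are independent, and since $V\in G_{i-1}$ the event $\{V+W=0\}$ forces $W\in G_{i-1}$. Conditioning on $W$,
\[\tau(v_{\le n_i+j})=\sum_{h\in G_{i-1}}\mathbb{P}(W=h)\,\mathbb{P}(V=-h)=\mathbb{P}(W\in G_{i-1})\cdot\mathbb{E}\big[\mathbb{P}(V=-W)\,\big|\,W\in G_{i-1}\big],\]
and reducing $W$ modulo $G_{i-1}$ identifies $\{W\in G_{i-1}\}$ with $\{\sum_{b=1}^{j}\xi_{n_i+b}\,v^{(i)}_b=0\}$ in $G_i/G_{i-1}$, so $\mathbb{P}(W\in G_{i-1})=\tau(v^{(i)}_{\le j})$ by the definition of $\tau$. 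Thus the lemma reduces to the claim
\begin{equation}\label{auxclaim}
\mathbb{P}\Big(\sum_{a=1}^{n_i}\xi_a v_a=h\Big)\in\Big[\tfrac{(1-(p-1)\delta)^{i-1}}{p^{i-1}},\ \tfrac{(1+(p-1)\delta)^{i-1}}{p^{i-1}}\Big]\qquad\text{for all }h\in G_{i-1},
\end{equation}
since averaging \eqref{auxclaim} over $W$ and multiplying by $\tau(v^{(i)}_{\le j})\ge 0$ gives exactly the asserted two-sided bound.

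I would prove \eqref{auxclaim} by induction on $i\ge 1$. One warning up front: a single application of Lemma~\ref{Fourierestimate} with $H=G_{i-1}$ only yields an additive error $\sim (p^{i-1}-1)\delta$, which for $i\ge 3$ already exceeds the permitted $(1+(p-1)\delta)^{i-1}-1$, so the layerwise induction — which converts the $i-1$ additive errors into a product — is essential. For $i=1$ the sum $\sum_{a=1}^{n_1}\xi_a v_a$ vanishes identically (all $v_a\in G_0=\{0\}$) and $h\in\{0\}$, so \eqref{auxclaim} is trivial. For the step, fix $i\ge 2$, assume \eqref{auxclaim} with $i$ replaced by $i-1$, and split $V=V'+W'$ with $V'=\sum_{a=1}^{n_{i-1}}\xi_a v_a\in G_{i-2}$ and $W'=\sum_{a=n_{i-1}+1}^{n_i}\xi_a v_a\in G_{i-1}$ (the block-$(i-1)$ contribution); these are independent. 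As before, $\{V=h\}$ forces $W'\equiv h\pmod{G_{i-2}}$, and conditioning on $W'$,
\[\mathbb{P}(V=h)=\mathbb{P}\big(W'\equiv h\bmod G_{i-2}\big)\cdot\mathbb{E}\big[\mathbb{P}(V'=h-W')\,\big|\,W'\equiv h\bmod G_{i-2}\big].\]
On the conditioning event $h-W'\in G_{i-2}$, so by the induction hypothesis every realized value of $\mathbb{P}(V'=h-W')$ — hence also the conditional expectation, $W'$ being independent of $\xi_1,\dots,\xi_{n_{i-1}}$ — lies in $[\,p^{-(i-2)}(1-(p-1)\delta)^{i-2},\ p^{-(i-2)}(1+(p-1)\delta)^{i-2}\,]$.

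The remaining factor $\mathbb{P}(W'\equiv h\bmod G_{i-2})$ equals $\mathbb{P}\big(\sum_{a=n_{i-1}+1}^{n_i}\xi_a\bar v_a=\bar h\big)$ in the quotient $G_{i-1}/G_{i-2}$ (bars denoting images modulo $G_{i-2}$), which is isomorphic to $\mathbb{F}_p$ because the chain is maximal. Every nontrivial character $\varrho$ of $\mathbb{F}_p$ has trivial kernel, so Lemma~\ref{Fourierestimate}, applied with $H=\mathbb{F}_p$, gives
\[\Big|p\,\mathbb{P}\big(W'\equiv h\bmod G_{i-2}\big)-1\Big|\le(p-1)\exp\!\big(-c\,\|\bar v\|_\varrho\big),\qquad \|\bar v\|_\varrho=\big|\{\,n_{i-1}+1\le a\le n_i:\ v_a\notin G_{i-2}\,\}\big|.\]
By the defining property of $\U{n_1,\dots,n_{\ell+1}}{g_1,\dots,g_\ell}$ at index $i-1$ this count is at least $(n_i-n_{i-1})/3$, and well separation gives $n_i-n_{i-1}>3\log^2 n$; hence $\|\bar v\|_\varrho>\log^2 n$ and $\mathbb{P}(W'\equiv h\bmod G_{i-2})\in[\,p^{-1}(1-(p-1)\delta),\ p^{-1}(1+(p-1)\delta)\,]$. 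Multiplying this with the bound of the previous paragraph yields \eqref{auxclaim} for $i$, closing the induction and hence completing the proof.

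I expect the only genuine obstacle to be recognizing that \eqref{auxclaim} cannot be obtained from a single Fourier estimate and must instead be peeled layer by layer, so that the small errors of the $i-1$ quotients combine multiplicatively rather than additively; once this is seen, what remains is the (routine but slightly delicate) bookkeeping that keeps each partial sum inside the correct subgroup, so that Lemma~\ref{Fourierestimate} can be invoked through the $\U$-condition and well separation at each level and the induction hypothesis at the level below.
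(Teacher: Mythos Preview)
Your proof is correct and is essentially the paper's argument: both peel off the quotients $G_h/G_{h-1}$ one layer at a time, applying Lemma~\ref{Fourierestimate} to each $\mathbb{F}_p$-quotient via the $U$-condition and well separation so that the $i-1$ factors $1\pm(p-1)\delta$ combine multiplicatively. The only difference is cosmetic: the paper writes the full telescoping product out directly (conditioning from the top block downward), whereas you package the same decomposition as an induction on $i$ via your auxiliary uniform bound on $\mathbb{P}\big(\sum_{a\le n_i}\xi_a v_a=h\big)$.
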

\begin{proof}
    Obviously $\sum_{k=1}^{n_i+j} \xi_k v_k=\sum_{k=n_1+1}^{n_i+j} \xi_k v_k$.
    
    Note that $\sum_{k=n_1+1}^{n_i} \xi_k v_k\in G_{i-1}$. Thus, on the event that $\sum_{k=n_1+1}^{n_i+j} \xi_k v_k=0$, we must have $\sum_{k=n_i+1}^{n_i+j} \xi_k v_k\in G_{i-1}$. Therefore,
    \[\mathbb{P}\left(\sum_{k=n_1+1}^{n_i+j} \xi_k v_k=0\right)=\mathbb{P}\left(\sum_{k=n_i+1}^{n_i+j} \xi_k v_k\in G_{i-1}\right)\mathbb{P}\left(\sum_{k=n_1+1}^{n_i+j} \xi_k v_k=0\,\Big|\,\sum_{k=n_i+1}^{n_i+j} \xi_k v_k\in G_{i-1}\right).\]
    Continuing in a similar manner
    \begin{align*}
    \mathbb{P}&\left(\sum_{k=n_1+1}^{n_i+j} \xi_k v_k=0\right)\\&=\mathbb{P}\left(\sum_{k=n_i+1}^{n_i+j} \xi_k v_k\in G_{i-1}\right) \prod_{h=1}^{i-1} \mathbb{P}\left(\sum_{k=n_h+1}^{n_i+j} \xi_k v_k\in G_{h-1}\,\Big|\,\sum_{k=n_{h+1}+1}^{n_{i}+j} \xi_k v_k\in G_{h}\right)
    \\&=\mathbb{P}\left(\sum_{k=n_i+1}^{n_i+j} \xi_k (v_k+G_{i-1})=0+G_{i-1}\right) \\&\qquad\cdot\prod_{h=1}^{i-1} \mathbb{P}\left(\sum_{k=n_h+1}^{n_{h+1}} \xi_k (v_k+G_{h-1})=-\sum_{k=n_{h+1}+1}^{n_{i}+j} \xi_k (v_k+G_{h-1})\,\Big|\,\sum_{k=n_{h+1}+1}^{n_{i}+j} \xi_k v_k\in G_{h}\right).
\end{align*}

Here
\[\mathbb{P}\left(\sum_{k=n_i+1}^{n_i+j} \xi_k (v_k+G_{i-1})=0+G_{i-1}\right)=\tau(v^{(i)}_{\le j}).\]
Let $h\in\{1,2,\dots,i-1\}$. Since $(n_1,\dots,n_{\ell+1}),(g_1,\dots, g_{\ell})$ is a well separated $(n,\ell)$-skeleton and $v\in \U{n_1,\dots,n_{\ell+1}}{g_1,\dots, g_{\ell}}$, we see that
\[\|v^{(h)}\|_\varrho>\log^2 n\quad\text{ for all }1\neq \varrho\in \widehat{G_h/G_{h-1}}.\]
Thus, by Lemma~\ref{Fourierestimate}, we obtain that
\begin{multline*}
    \frac{1}p\left(1-(p-1)\exp(-c\log^2 n)\right)\\\le \mathbb{P}\left(\sum_{k=n_h+1}^{n_{h+1}} \xi_k (v_k+G_{h-1})=-\sum_{k=n_{h+1}+1}^{n_{i}+j} \xi_k (v_k+G_{h-1})\,\Big|\,\sum_{k=n_{h+1}+1}^{n_{i}+j} \xi_k v_k\in G_{h}\right)\\\le \frac{1}p\left(1+(p-1)\exp(-c\log^2 n)\right).
\end{multline*}
Therefore, the statement follows.
\end{proof}

\begin{lemma}\label{probdecompose}
We have
\[\lim_{n\to\infty} \max \left|\frac{\prod_{i=1}^\ell p^{-(i-1)(n_{i+1}(v)-n_i(v))}\mathbb{P}(L_{n_{i+1}(v)-n_i(v)} v^{(i)}=0)}{\mathbb{P}(L_nv=0)}-1\right|=0,\]
where the max is over all $v$ such that $v\in\U{n_1,\dots,n_{\ell+1}}{g_1,\dots, g_{\ell}}$ for some well separated $(n,\ell)$-skeleton $(n_1,\dots,n_{\ell+1}),(g_1,\dots, g_{\ell})$.

\end{lemma}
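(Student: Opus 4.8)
The plan is to use the multiplicative decomposition \eqref{randomvscounting0}, which expresses $\mathbb{P}(L_nv=0)=\prod_{i=1}^n\tau(v_{\le i})$, and to compare this product term-by-term with the product of the analogous quantities for the blocks $v^{(1)},\dots,v^{(\ell)}$. First I would split the index range $\{1,2,\dots,n\}$ into the blocks $(n_i,n_{i+1}]$ for $i=1,\dots,\ell$ (note $n_1=0$ since $G_0=\{0\}$, so $\tau(v_{\le j})=1$ for $j\le n_1$, and more generally the first block contributes $\tau(v^{(1)}_{\le j})$ exactly, as $G_0$ is trivial). On the block indexed by $i$, Lemma~\ref{LemmaforU} gives, for each $1\le j\le n_{i+1}-n_i$, the two-sided bound
\[\left(1-(p-1)e^{-c\log^2 n}\right)^{i-1}\frac{\tau(v^{(i)}_{\le j})}{p^{i-1}}\le \tau(v_{\le n_i+j})\le \left(1+(p-1)e^{-c\log^2 n}\right)^{i-1}\frac{\tau(v^{(i)}_{\le j})}{p^{i-1}}.\]
Taking the product over $j=1,\dots,n_{i+1}-n_i$ and then over $i=1,\dots,\ell$, the factors $p^{-(i-1)}$ accumulate to exactly $\prod_{i=1}^\ell p^{-(i-1)(n_{i+1}-n_i)}$, and the $\tau(v^{(i)}_{\le j})$ factors accumulate to $\prod_{i=1}^\ell\prod_{j=1}^{n_{i+1}-n_i}\tau(v^{(i)}_{\le j})=\prod_{i=1}^\ell \mathbb{P}(L_{n_{i+1}-n_i}v^{(i)}=0)$ by \eqref{randomvscounting0} applied in each quotient group $G_i/G_{i-1}\cong\mathbb{F}_p$.

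Thus the ratio in the statement equals $\prod_{i=1}^\ell\left(1\pm(p-1)e^{-c\log^2 n}\right)^{(i-1)(n_{i+1}-n_i)}$, sandwiched between the lower product (all minus signs) and the upper product (all plus signs). Since $\sum_{i=1}^\ell (i-1)(n_{i+1}-n_i)\le \ell n$, both bounding products lie between $\left(1-(p-1)e^{-c\log^2 n}\right)^{\ell n}$ and $\left(1+(p-1)e^{-c\log^2 n}\right)^{\ell n}$. Taking logarithms and using $|\log(1\pm x)|\le 2x$ for small $x$, the deviation of the ratio from $1$ is at most of order $\ell n\cdot e^{-c\log^2 n}$, which tends to $0$ as $n\to\infty$ because $e^{-c\log^2 n}=n^{-c\log n}$ decays faster than any polynomial in $n$. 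Since this bound is uniform over all $v$ of the stated form (it depends on $n$ only, not on the skeleton or on $v$), the maximum over such $v$ goes to $0$, which is exactly the claim.

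The only genuinely delicate point — and it is already handled by Lemma~\ref{LemmaforU} — is that the per-term comparison $\tau(v_{\le n_i+j})\approx p^{-(i-1)}\tau(v^{(i)}_{\le j})$ requires the earlier blocks $v^{(1)},\dots,v^{(i-1)}$ to be ``spread out'' so that the conditioning probabilities $\mathbb{P}(\sum_k\xi_k(v_k+G_{h-1})=\cdots\mid\cdots)$ are each within $(1\pm(p-1)e^{-c\log^2 n})/p$ of $1/p$; this is precisely why we restrict to $\U{n_1,\dots,n_{\ell+1}}{g_1,\dots,g_\ell}$ with a well separated skeleton, where $\|v^{(h)}\|_\varrho>\log^2 n$ for every nontrivial $\varrho$. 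So in the proof I would simply invoke Lemma~\ref{LemmaforU}, multiply the inequalities, identify the two accumulated products as claimed, and conclude with the crude bound $\sum_i (i-1)(n_{i+1}-n_i)\le \ell n$ together with the super-polynomial decay of $e^{-c\log^2 n}$. No step should present a real obstacle beyond bookkeeping; the main thing to be careful about is correctly matching the exponent $\sum_{i}(i-1)(n_{i+1}-n_i)$ appearing in the $p$-power with the one appearing in the error, and making sure the first block ($i=1$) contributes no error and no $p$-power, consistent with $G_0=\{0\}$.
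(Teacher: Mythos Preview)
Your proposal is correct and follows essentially the same argument as the paper: both invoke Lemma~\ref{LemmaforU} term by term on each block, multiply the resulting two-sided bounds, identify $\prod_j\tau(v^{(i)}_{\le j})=\mathbb{P}(L_{n_{i+1}-n_i}v^{(i)}=0)$, and conclude via $(1\pm(p-1)e^{-c\log^2 n})^{\ell n}\to 1$. One small inaccuracy: it is not that $n_1=0$ in general (the definition allows any $n_1\ge 0$), but rather that $v_j=0$ for $j\le n_1$, so those factors are trivially~$1$---your subsequent reasoning already reflects this correctly.
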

\begin{proof}
Let  $v\in\U{n_1,\dots,n_{\ell+1}}{g_1,\dots, g_{\ell}}$ for some well separated $(n,\ell)$-skeleton $(n_1,\dots,n_{\ell+1}),(g_1,\dots, g_{\ell})$. By Lemma~\ref{LemmaforU},  we have
\begin{align*}\mathbb{P}(L_n v)&=\prod_{i=1}^\ell\prod_{j=1}^{n_{i+1}-n_i}\tau(v_{\le n_i+j})\\&\le \prod_{i=1}^\ell p^{-(i-1)(n_{i+1}-n_i)}\mathbb{P}(L_{n_{i+1}-n_i}v^{(i)}=0)(1+(p-1)\exp(-c\log^2 n))^{(i-1)(n_{i+1}-n_i)}\\&\le (1+(p-1)\exp(-c\log^2 n))^{\ell n}\prod_{i=1}^\ell p^{-(i-1)(n_{i+1}-n_i)}\mathbb{P}(L_{n_{i+1}-n_i}v^{(i)}=0).
\end{align*}
Note that here $\lim_{n\to\infty} (1+(p-1)\exp(-c\log^2 n))^{\ell n}=1.$ We can obtain a matching lower bound in a similar manner.
\end{proof}

Consider a sequence $0\le n_1<\dots<n_{\ell+1}$ satisfying \eqref{wellsepcond}. For a maximal chain of subgroups $\{0\}=G_0\subsetneq G_1\subsetneq \cdots\subsetneq G_\ell=G$, let
$\U{n_1,\dots,n_{\ell+1}}{G_1,\dots,G_\ell}=\bigcup \U{n_1,\dots,n_{\ell+1}}{g_1,\dots,g_\ell}$
where the union is over all the choices of $g_i=G_i\setminus G_{i-1}$.

\begin{lemma}\label{lemmanonneg}
We have
\[\lim_{n\to\infty}\max \left|\mathbb{E}\left|\left\{v\in \U{n_1,\dots,n_{\ell+1}}{G_1,\dots,G_\ell}\,:\,L_nv=0\right\}\right|-\chi_0^\ell\right|=0,\]
where the maximum is over all the choices $0\le n_1<\dots<n_{\ell+1}=n$ satisfying \eqref{wellsepcond} and maximal chains of subgroups $\{0\}=G_0\subsetneq G_1\subsetneq \cdots\subsetneq G_\ell=G$.

\end{lemma}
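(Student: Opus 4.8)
The key structural fact, already in hand via Lemma~\ref{probdecompose}, is that for $v\in\U{n_1,\dots,n_{\ell+1}}{g_1,\dots,g_\ell}$ with a well separated skeleton, the probability $\mathbb{P}(L_nv=0)$ essentially factors as $\prod_{i=1}^\ell p^{-(i-1)(n_{i+1}-n_i)}\mathbb{P}(L_{n_{i+1}-n_i}v^{(i)}=0)$, up to a multiplicative error tending to $1$ uniformly. The plan is to sum this factorization over $v$. Summing over $v\in\U{n_1,\dots,n_{\ell+1}}{G_1,\dots,G_\ell}$ means choosing, for each block $i$, an element $v^{(i)}\in (G_i/G_{i-1})^{n_{i+1}-n_i}\cong\mathbb{F}_p^{m_i}$ (where $m_i=n_{i+1}-n_i$) whose first coordinate is nonzero and which has at least $m_i/3$ nonzero coordinates — that is, $v^{(i)}\in A_{m_i}$ in the notation preceding Lemma~\ref{lemmachi0existsS} — together with an arbitrary lift of each coordinate to $G$ (the number of lifts is exactly $p^{(i-1)m_i}$, since each coset of $G_{i-1}$ has $|G_{i-1}|=p^{i-1}$ elements). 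Crucially, $v\in\U{}{}$ iff each $v^{(i)}\in A_{m_i}$, and distinct well separated skeletons give disjoint $\U{}{}$ sets, and the blocks decouple completely.

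So I would first write, using Lemma~\ref{probdecompose} and $\mathbb{E}|\{v:L_nv=0\}| = \sum_v \mathbb{P}(L_nv=0)$,
\begin{align*}
\mathbb{E}\left|\left\{v\in \U{n_1,\dots,n_{\ell+1}}{G_1,\dots,G_\ell}:L_nv=0\right\}\right|
&= (1+o(1))\sum_{v\in \U{n_1,\dots,n_{\ell+1}}{G_1,\dots,G_\ell}} \prod_{i=1}^\ell p^{-(i-1)m_i}\mathbb{P}(L_{m_i}v^{(i)}=0)\\
&= (1+o(1))\prod_{i=1}^\ell \left( p^{(i-1)m_i}\cdot p^{-(i-1)m_i}\sum_{u\in A_{m_i}}\mathbb{P}(L_{m_i}u=0)\right)\\
&= (1+o(1))\prod_{i=1}^\ell \mathbb{E}\left|\left\{u\in A_{m_i}: L_{m_i}u=0\right\}\right|,
\end{align*}
where the $o(1)$ is uniform over all admissible skeletons and chains. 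The factor $p^{(i-1)m_i}$ counts the lifts and cancels the $p^{-(i-1)m_i}$; the identification of the $i$-th factor with a sum over $A_{m_i}$ uses that $v^{(i)}$ ranges over all of $A_{m_i}$ as $v$ ranges over $\U{}{}$ with the $i$-th block fixed to a given pattern, and that $L_{m_i}$ has i.i.d. entries distributed as $L_{n_{i+1}-n_i}$.

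Then I would invoke Lemma~\ref{lemmachi0existsS}(c): $\lim_{m\to\infty}\mathbb{E}|\{u\in A_m: L_mu=0\}| = \chi_0$. Since the skeleton is well separated, each $m_i = n_{i+1}-n_i > 3\log^2 n \to\infty$ as $n\to\infty$, so every one of the $\ell$ factors converges to $\chi_0$, uniformly in the choice of skeleton (because the convergence in Lemma~\ref{lemmachi0existsS}(c) is a convergence of a single sequence in $m$, and $m_i\to\infty$ uniformly). Hence the product tends to $\chi_0^\ell$ uniformly, and combined with the uniform $1+o(1)$ from Lemma~\ref{probdecompose} this gives the claim.

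**Main obstacle.** The genuine content is all packed into the earlier lemmas; what remains here is bookkeeping, but the one point requiring care is the \emph{uniformity} of the limit over the (growing in number) family of skeletons. I must make sure that: (i) the $o(1)$ error in Lemma~\ref{probdecompose} is stated uniformly over all well separated skeletons (it is — the bound there is $(1+(p-1)\exp(-c\log^2 n))^{\ell n}$, independent of the skeleton); and (ii) writing $\varepsilon(m)=|\mathbb{E}|\{u\in A_m:L_mu=0\}|-\chi_0|$, we have $\sup_{m>3\log^2 n}\varepsilon(m)\to 0$, which holds because $\varepsilon(m)\to 0$. One should also keep $\chi_0$ bounded away from $0$ and $\infty$ (Lemma~\ref{lemmachi0existsS}(b)) so that a bound like $\prod(\chi_0+\varepsilon(m_i)) - \chi_0^\ell = O(\max_i \varepsilon(m_i))$ is legitimate with constants depending only on $\ell$ and $\chi_0$. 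No step here looks like it will fight back.
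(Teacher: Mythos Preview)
Your proof is correct and follows essentially the same route as the paper: use the map $v\mapsto(v^{(1)},\dots,v^{(\ell)})$ to identify $\U{n_1,\dots,n_{\ell+1}}{G_1,\dots,G_\ell}$ with the product $\prod_i A_{m_i}$ (each fiber having size $\prod_i p^{(i-1)m_i}$), apply Lemma~\ref{probdecompose} to factor the probability, and finish with Lemma~\ref{lemmachi0existsS}(c). Your explicit attention to the uniformity of both the $o(1)$ from Lemma~\ref{probdecompose} and the convergence in Lemma~\ref{lemmachi0existsS}(c) is exactly what is needed; the paper's proof is terser on these points but the logic is the same.
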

\begin{proof}
 Consider a sequence $0\le n_1<\dots<n_{\ell+1}=n$ satisfying \eqref{wellsepcond} and a maximal chain of subgroups $\{0\}=G_0\subsetneq G_1\subsetneq \cdots\subsetneq G_\ell=G$. Let us consider the map
 \[v\mapsto (v^{(1)},\dots,v^{(\ell)})\]
 restricted to $\U{n_1,\dots,n_{\ell+1}}{G_1,\dots,G_\ell}$. After fixing an isomorphism between $\mathbb{F}_p$ and $G_i/G_{i-1}$, the image of this map is $A_{n_2-n_1}\times A_{n_3-n_2}\times \cdots\times A_{n_{\ell+1}-n_\ell}$. Moreover, the preimage of each element of this set has size $\prod_{i=1}^\ell p^{(i-1)(n_{i+1}-n_i)}$. Combining this observation with Lemma~\ref{probdecompose}, we have
\[
 \lim_{n\to\infty}\max \left|\frac{\mathbb{E}|\{v\in \U{n_1,\dots,n_{\ell+1}}{G_1,\dots,G_\ell}\,:\,L_nv=0\}|}{\prod_{i=1}^{\ell} \mathbb{E}|\{v\in A_{n_{i+1}-n_i}\,:\,L_{n_{i+1}-n_i}v=0\}|}-1\right|=0,\]
where the maximum is over all the choices $0\le n_1<\dots<n_{\ell+1}=n$ satisfying \eqref{wellsepcond} and a maximal chain of subgroups $\{0\}=G_0\subsetneq G_1\subsetneq \cdots\subsetneq G_\ell=G$.
Combining this with part (c) of Lemma~\ref{lemmachi0existsS}, the statement follows. 
\end{proof}

The number of tuples $0\le n_1<n_2<\cdots<n_{\ell+1}=n$ satisfying \eqref{wellsepcond} is $(1+o(1))\frac{n^\ell}{\ell!}$. Thus, Theorem~\ref{thmmoment} follows by combining Lemma~\ref{lemmaneg1}, Lemma~\ref{lemmaneq2}, Lemma~\ref{lemmaneq3} and Lemma~\ref{lemmanonneg}.

\section{Values of $\chi_0$ in special cases}
In this section, we prove Lemma~\ref{specialxi}.

Let $1\neq\varrho\in \widehat{\mathbb{F}_p}$. If $0\neq g\in \mathbb{F}_p$, then
\[\mathbb{E}\varrho(\xi g)=\frac{p\alpha-1}{p-1}\varrho(0)+\sum_{h\in \mathbb{F}_p}\frac{1-\alpha}{p-1}\varrho(h)=\frac{p\alpha-1}{p-1}.\]
Given a vector $v\in \mathbb{F}_p^m$, let
\[\|v\|=|\{1\le i\le m\,:\,v_i\neq 0\}|.\]
Then it follows from \eqref{eqt1} that,
\begin{equation}\label{ptauformula}p\tau(v)=1+(p-1)\left(\frac{p\alpha-1}{p-1}\right)^{\|v\|}=\beta_{\|v\|}.\end{equation}
Let us define $X=(X_1,X_2,\dots)$, where $X_1,X_2,\dots$ are independent, $X_1$ is a uniform random element of $\mathbb{F}_p\setminus \{0\}$, and $X_2,X_3,\dots$ are uniform random elements of $\mathbb{F}_p$. 

Let \[Y_n=\prod_{i=1}^n p\tau(X_{\le i})\quad\text{ and }\quad Y_\infty=\prod_{i=1}^n p\tau(X_{\le i}).\]

Then
\[\mathbb{E}|\{v\in \mathbb{F}_p^n\,:\,v_1\neq 0, L_nv=0\}|=\frac{p-1}p\mathbb{E} Y_n.\]
By conditioning on $X_1$ and applying Lemma~\ref{ZnLemma}, we obtain
\begin{equation}\label{Yinf}\lim_{n\to\infty}\mathbb{E}|\{v\in \mathbb{F}_p^n\,:\,v_1\neq 0, L_nv=0\}|=\lim_{n\to\infty}\frac{p-1}p\mathbb{E} Y_n=\frac{p-1}p\mathbb{E} Y_\infty.\end{equation}

For $i\ge 1$, let us define $T_i=\min\{j\,:\,\|X_{\le j}\|=i\}$. By \eqref{ptauformula}, we see that
\[p \tau(X_{\le j})=\beta_i \quad \text{ for all }T_i\le j<T_{i+1}.\]

Thus,
\[Y_\infty=\prod_{i=1}^\infty \beta_i^{T_{i+1}-T_i}.\]

Note that $T_2-T_1,T_3-T_2,T_4-T_3,\dots$ are i.i.d. random variables, such that for all $k\ge 1$, we have
\[\mathbb{P}(T_{i+1}-T_i=k)=\frac{p-1}{p^k}.\]

Thus, 
\[\mathbb{E}\beta_i^{T_{i+1}-T_i}=\sum_{k=1}^\infty (p-1)\left(\frac{\beta_i}p\right)^k=\frac{(p-1)\beta_i}{p-\beta_i}.\]

By the independence of $T_2-T_1,T_3-T_2,T_4-T_3,\dots$, we obtain
\[\mathbb{E} Y_\infty=\prod_{i=1}^\infty \frac{(p-1)\beta_i}{p-\beta_i}.\]
Combining this with \eqref{Yinf}, Lemma~\ref{specialxi} follows.

\bibliography{references}

\bibliographystyle{plain}

{\tt meszaros@renyi.hu}

HUN-REN Alfr\'ed R\'enyi Institute of Mathematics,

Budapest, Hungary

\end{document}